\definecolor{darkblue}{HTML}{004C93} 
\definecolor{MainRed}{rgb}{.6, .1, .1}
\definecolor{myblue}{RGB}{0, 102, 204}
\numberwithin{equation}{section}
\DeclareMathOperator{\diver}{div}
\DeclareMathOperator{\loc}{loc}
\DeclareMathOperator{\bigO}{O}
\DeclareMathOperator{\smallo}{o}
\DeclareMathOperator{\crits}{2^\star(s)}
\newcommand{\R}{\mathbb{R}}
\newcommand{\N}{\mathbb{N}}
\newcommand{\eps}{\varepsilon}
\renewcommand{\[}{\left[}
\renewcommand{\]}{\right]}
\renewcommand{\(}{\left(}
\renewcommand{\)}{\right)}
\def \ra{r_{\alpha}}
\def \bb{\hbox}
\def \ma{\mu_{\alpha}}
\def \nna{\nu_{\alpha}}
\def \ua{u_{\alpha}}
\def \dg{\Delta_g}
\def \ha{\widehat{h}_{\alpha}}
\def \tua{\widetilde{u}_{\alpha}}
\def \la{\lim_{\alpha\to +\infty}}
\def \pr{\partial}
\def \uha  {\widehat{u}_{\alpha}}
\def \bns{\mathfrak{b}_{n,s}}
\def \bnss{\mathfrak{b}_{3,s}}
\def \bn{\mathfrak{b}_{4,s}}
\def \ut  {\widetilde{u}_{\alpha}}
\def \xo{x_0}
\def \ig{\iota_{g}}
\def \po{\varphi_{\infty}}
\def \sg{\operatorname{Scal}_g}
\def \tm{\mathcal{TM}}
\newtheorem{theorem}{Theorem}[section]
\newtheorem{prop}{Proposition}[section]
\newtheorem{lemma}{Lemma}[section]
\newtheorem{step}{Step}[section]
\newtheorem{claim}{Claim}[section]
\newtheorem{remark}{Remark}[section]
\newtheorem{corollary}{Corollary}[section]
\begin{document}

\title[Hardy-Sobolev Equation on manifolds]{Compactness for the Hardy-Sobolev equation on manifolds}
\author{Hussein Cheikh Ali}
\address{Laboratoire Paul Painlev\'e, Universit\'e de Lille, Cit\'e Scientifique - B\^atiment M2 59655 Villeneuve d'Ascq Cedex France}
\email{frederic.cheikh-ali@univ-lille.fr}
\author{Saikat Mazumdar}
\address{Saikat Mazumdar , Department of Mathematics, Indian Institute of Technology, Bombay, Powai, Mumbai, Maharashtra 400076, India}
\email{saikat.mazumdar@iitb.ac.in, saikat@math.iitb.ac.in}
\date\today
\thanks{The authors would like to sincerely thank Prof. Fr{\'e}d{\'e}ric Robert for suggesting this problem and for many helpful discussions. \\ The second author gratefully acknowledges the support from the MATRICS grant MTR/2022/000447 of the Science and Engineering Research Board (currently ANRF) of India.}
\begin{abstract}
Let $(M, g)$ be a closed Riemannian manifold of dimension $n \geq 3$, and let $h \in C^1(M)$ be such that the operator $\Delta_g + h$ is coercive. Fix  $x_0 \in M$ and $s \in (0, 2)$. We obtain uniform bounds on the solutions of the critical \emph{Hardy-Sobolev equation}:
\begin{equation}\label{HS0}
\tag{{\color{MainRed}HS}}	\left\{\begin{array}{ll}
\Delta_{g}u + hu = \frac{u^{\crits-1}}{d_{g}(\xo,x)^{s}}   & \hbox{ in }M\setminus\{\xo\},  \\
\qquad u > 0  &\hbox{ in }M\setminus\{\xo\},
\end{array}\right.
\end{equation}
where $\Delta_{g}:=-\diver_{g}(\nabla)$ and $\crits:=2(n-s)/(n-2)$.  More precisely, we assume  $h(x_0)<\frac{(n-2)(6-s)}{12(2n-2-s)}\mathrm{Scal}_g(x_0),$ when $n \geq 4$, and $h\le\frac{1}{8}\sg$, $h(\xo)<\frac{1}{8}\sg(\xo)$ when $n = 3$. Here, $\mathrm{Scal}_g$ denotes the scalar curvature of $(M, g)$. These conditions were introduced in \cite{HCA4}, and shown to be optimal in \cite{CAR} for a single bubble configuration when $n\ge7$ . 

\noindent
We do not assume any bounds on the energy or the Sobolev norm of the solutions.
\end{abstract}
\maketitle

\section{Introduction and main results}\label{Sec1}
The aim of this paper is to obtain compactness results for the critical Hardy-Sobolev on closed manifolds.

\noindent
Let $(M,g)$ be a closed and smooth Riemannian manifold of dimension $n \geq 3$.  We fix $\xo \in M$, $s\in (0,2)$, and  let  $\crits:= 2(n-s)/(n-2)$ denote the critical Hardy-Sobolev exponent. We consider the following \emph{Hardy-Sobolev equation}:
 \begin{align}\label{eqn:HS1}
	\left\{\begin{array}{ll}
	\Delta_{g}u + hu = \dfrac{u^{\crits-1}}{d_{g}(\xo,x)^{s}} & \hbox{ in }M\setminus\{\xo\},\\
		\quad u > 0  &\hbox{ in }M\setminus\{\xo\}.
	\end{array}\right.
\end{align}
Here $\Delta_{g}:=-\diver_{g}(\nabla)$  is the Laplace-Beltrami operator with (minus sign convention) and  $h \in C^{1}(M)$.  We  will also assume throughout this paper that $\Delta + h$ is coercive, that is, there exists $\mathscr{C}>0$ such that 
\begin{equation*}
\int_{M} \left(|\nabla v|_g^2+h v^2 \right) \, dv_g \geq \mathscr{C}\|v\|^{2}_{H^{2}_{1}}\, \bb{ for all } v\in H_1^2(M),
\end{equation*}
where the Sobolev space $H^{2}_{1}(M)$ can equivalently  be defined as the completion of $C^{\infty}(M)$ with respect to the norm $\|\cdot\|_{H^{2}_{1}}:=\sqrt{\|\nabla\cdot\|^{2}_{L^{2}}+\|\cdot\|^{2}_{L^{2}}}~$\,.
\smallskip

The exponent $\crits$ is critical for the Sobolev embedding $H^{2}_{1}(M)\to L^{p}(M,d_{g}(\cdot,x_{0})^{-s})$, where $1\le p\le \crits$ and our equation \eqref{eqn:HS1} is variationaly noncompact in general. Note that the singular term $d_g(\xo,x)^{-s}$ in equation \eqref{eqn:HS1} breaks the natural conformal invariance property of the problem. 

\noindent
We further let $\sg$  denote the scalar curvature of the manifold $(M,g)$ and   
\begin{align}\label{def:c}
	\mathfrak{c}_{n,s}:=\frac{(n-2)(6-s)}{12(2n-2-s)},~ 0\le s<2.
\end{align}

\noindent
The case $s=0$ corresponds to Yamabe-type equations, and likewise the potential $\mathfrak{c}_{n,s}\sg$ plays a similar defining role here in obtaining existence and compactness of solutions of \eqref{eqn:HS1}. For $s>0$ it turns out the behaviour around the singularity $x_{0}$ only really matters, as the problem is subcritical and hence compact on domains not containing  $x_{0}$. We exploit this phenomenon in this paper, and this further simplifies and shortens many of the arguments needed when $s=0$ for the case of Yamabe-type problems. We consider solutions that can a priori develop an arbitrary number of bubbles in the bubble tree decomposition. Moreover, we do not assume any bound on the energy or the Sobolev norm of the solutions. 
\smallskip

\noindent
The existence of solutions of \eqref{eqn:HS1} is well-understood, and in \cite{J1} Jaber showed the existence of positive extremals under the local sign condition $h(\xo)<\mathfrak{c}_{n,s}\sg(\xo)$ for  $n\geq 4$. For dimension $n=3$, as in the non-singular case $s=0$, the mass of the operator $\Delta_g +h$ plays a crucial role, and Jaber in the same paper gave an existence result when the mass at $x_{0}$ is positive. For the definition of mass, see \eqref{green0} and one may think of it as the constant term in the expansion of the Green's function of the operator $\Delta_{g}+h$.  Concerning regularity, it follows from Jaber \cite{J1} that any solution $u \in H^{2}_{1}(M)$ of \eqref{eqn:HS1}  satisfies $u \in C^{0,\theta}(M)\cap C^{2,\eta}_{\loc}(M \setminus \{\xo\})$, for all $0 <\theta < \min\{ 1, 2-s\} $ and $0< \eta <1$. Also see Jaber \cite{Jaber2} and Cheikh-Ali \cite{HCA4}  for a sharp form of the Hardy-Sobolev inequality on manifolds, and the conditions which give the existence of minimizers. \par

\smallskip Motivated by results on Yamabe-type problems, we obtain in this paper a compactness result for the solutions of the Hardy-Sobolev equation \eqref{eqn:HS1}. 
Compactness of solutions of \eqref{eqn:HS1} for the case $s=0$ is a very well-studied topic and the literature is abundant. For brevity, we refer the interested reader to Druet \cite{DruetIMRN} and the book by Hebey \cite{Hbooks} and the references therein. Compactness for the Hardy-Sobolev equation with $d_{g}(x_{0},x)$ replaced by the distance function from submanifolds was recently explored in \cite{HP}. For the geometric Yamabe poblem some notable developments are Schoen \cite{Schoen1, Schoen2}, Li-Zhu \cite{LiZhu} when $n=3$, Druet \cite{DruetIMRN}  when $n\leq5$,  Marques \cite{MarquesJDG} when $n \leq 7$,  Li-Zhang \cite{LiZhangCPDE,LiZhangJFA} when  $n \leq 11$ and Khuri-Marques-Schoen \cite{KMSJDG} when $n\leq 24$. Also see Druet-Premoselli \cite{DruetPremoselli} for stability of the Einstein–Lichnerowicz constraint system, and Premoselli-Vetois \cite{PV} for compactness of sign-changing solutions of the Yamabe type equation. For domains in $\R^{n}$ with Dirichlet boundary conditions and $s=0$ we refer to Druet and Laurain \cite{DruetLaurain}, Laurain and Konig \cite{LK}, and more recently to Cheikh-Ali and Premoselli \cite{CAPAPDE}. We also refer to Ghoussoub-Mazumdar-Robert \cite{GMRAMS, GMR2} for the blow up analysis of singular Hardy–Schr\"odinger type problems in the case $s \in (0,2)$. 
\smallskip

Our main result is the following theorem.
\begin{theorem}\label{main:theo1}
Let $(M,g)$ be a closed and smooth Riemannian manifold of dimension $n\geq 3$, fix $\xo \in M$ and $s\in (0,2)$. Let $h \in C^{1}(M)$ be such that the operator $\Delta_{g}+ h$ is coercive and assume that one of
the following assumptions is satisfied:
\begin{itemize}
	\item $n\geq 4$ and $h(\xo)<\mathfrak{c}_{n,s} \sg(\xo)$, where $c_{n,s}$ is given by \eqref{def:c}.
	\item $n=3$ and $m_h(\xo)\not=0$, where $m_h(\xo)$ is the mass of the operator $\Delta_g+h$ defined in \eqref{green0}.
\end{itemize}
Then, there exists a constant $\Lambda=\Lambda(M,g,s, h)$ such that any solution $u \in C^{0,\theta}(M)\cap C^{2}(M \setminus \{\xo\})$ of \eqref{eqn:HS1}, with $0<\theta<\min\{1, 2-s\}$ satisfies: 
\begin{align}\label{main bd}
\left\|u\right\|_{C^{0}(M)}+\left\|1/u\right\|_{C^{0}(M)}\le\Lambda.
\end{align}
\end{theorem}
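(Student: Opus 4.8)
The plan is to argue by contradiction. If no such $\Lambda$ exists, there is a sequence $(u_\alpha)_\alpha$ of solutions of \eqref{eqn:HS1} with $\|u_\alpha\|_{C^0(M)}+\|1/u_\alpha\|_{C^0(M)}\to+\infty$. I would distinguish two regimes: a \emph{blow-up} regime where $\max_M u_\alpha\to+\infty$, and a \emph{collapse} regime where $(u_\alpha)$ stays bounded in $C^0(M)$ but $\min_M u_\alpha\to 0$. Excluding blow-up is the heart of the matter, and the lower bound follows once blow-up has been ruled out.

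For the upper bound I would first exploit the structural feature stressed in the introduction: since $s>0$, the weight $d_g(\xo,\cdot)^{-s}$ is bounded away from $\xo$ and $\crits<\crit$, so on every compact subset of $M\setminus\{\xo\}$ the equation is subcritical and compact via elliptic estimates and the Harnack inequality. Hence if $\max_M u_\alpha\to+\infty$, the maximum is attained at points $x_\alpha\to\xo$ and all concentration is localized at the single point $\xo$. Setting $\mu_\alpha:=u_\alpha(x_\alpha)^{-2/(n-2)}\to 0$ and rescaling in geodesic normal coordinates at $\xo$, I would identify the limit profile as the standard Hardy-Sobolev extremal $U$ solving $\de U=|x|^{-s}U^{\crits-1}$ on $\rn$. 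A bubble-tree exhaustion then handles several concentration scales, all still centered at $\xo$, and yields the sharp pointwise control
\begin{equation*}
u_\alpha(x)\le C\,\frac{\mu_\alpha^{(n-2)/2}}{\left(\mu_\alpha^2+d_g(x_\alpha,x)^2\right)^{(n-2)/2}}
\end{equation*}
together with the matching gradient estimates near the concentration scale.

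The decisive step is to convert this concentration into a contradiction through a \emph{localized Pohozaev identity} on a small geodesic ball $B_g(\xo,\delta)$. Pairing the equation with the natural Pohozaev vector field and integrating by parts, the boundary integrals are evaluated using the sharp asymptotics above, while the interior terms produce, after rescaling by the bubble, precisely the quantity $\left(\mathfrak{c}_{n,s}\sg(\xo)-h(\xo)\right)$ times a strictly positive constant coming from the integral of $U$. The assumption $h(\xo)<\mathfrak{c}_{n,s}\sg(\xo)$ then forces a definite sign incompatible with a concentrating solution, which is the sought contradiction. When $n=3$ the curvature term is subleading and it is the constant term in the Green's function of $\Delta_g+h$, the mass $m_h(\xo)$ of \eqref{green0}, that governs the Pohozaev balance; the hypothesis $m_h(\xo)\neq0$ plays the analogous role.

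Once blow-up is excluded, $\|u_\alpha\|_{C^0(M)}\le C$ and elliptic regularity gives $u_\alpha\to u_\infty$ in $C^0(M)\cap C^2_{\loc}(M\setminus\{\xo\})$ with $u_\infty\ge 0$ a limiting solution. Testing \eqref{eqn:HS1} with $u_\alpha$ and combining coercivity with the Hardy-Sobolev inequality yields a uniform lower bound $\int_M\left(|\nabla u_\alpha|_g^2+hu_\alpha^2\right)\,dv_g\ge c_0>0$, so $u_\infty\not\equiv 0$; the strong maximum principle then gives $u_\infty>0$ on all of $M$, hence the uniform control of $\|1/u_\alpha\|_{C^0(M)}$. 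The step I expect to be the main obstacle is the refined pointwise analysis near $\xo$: one needs not only the leading bubble but also the next-order correction dictated by the local geometry and by $h$, with precision sufficient to evaluate the boundary terms of the Pohozaev identity. Once these refined estimates are in hand, the Pohozaev argument together with the dimension-dependent sign conditions closes the proof.
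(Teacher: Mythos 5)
Your overall architecture is the same as the paper's: argue by contradiction, rule out blow-up via localization at $\xo$ plus a Pohozaev balance involving $\mathfrak{c}_{n,s}\sg(\xo)$ for $n\ge4$ and the mass $m_h(\xo)$ for $n=3$, then get the lower bound on $u$ from coercivity, the Hardy--Sobolev inequality and the maximum principle (this last part is essentially identical to the paper's Section~\ref{proofmaintheo}, and is correct as you state it). However, there is a genuine gap at what you yourself call the decisive step. You invoke ``a bubble-tree exhaustion'' to produce the global pointwise control of $u_\alpha$ by a single bubble. Struwe-type bubble-tree decompositions require an a priori bound on the energy, and the theorem assumes none --- the paper stresses explicitly that no $H^2_1$ or energy bound is available. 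Obtaining the one-bubble control without energy bounds is precisely the core technical content of the paper (Proposition~\ref{P3}): one introduces the radius of influence $r_\alpha$ of \eqref{def:r}, proves $u_\alpha\le C B_\alpha$ on $B_g(\xo,4r_\alpha)$ by Harnack, and then shows $\liminf_\alpha r_\alpha>0$ by a delicate argument: if $r_\alpha\to0$, the rescaled functions converge by B\^ocher's theorem to $\mathscr{A}|X|^{2-n}+\mathscr{H}(X)$; the Green's function representation forces $\mathscr{H}\ge0$, while a Pohozaev identity at scale $r_\alpha$ \emph{together with the sign hypothesis} $h_\infty(\xo)\le\mathfrak{c}_{n,s}\sg(\xo)$ forces $\mathscr{H}(0)\le0$, hence $\mathscr{H}\equiv0$, contradicting the saturation of the $\varepsilon$-threshold at $|X|=1$. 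Note the structural point your sketch misses: the sign condition on $h(\xo)$ is consumed \emph{inside} this single-bubble reduction (to kill the harmonic part), not only in the final balance where you place it.

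Two further, smaller inaccuracies. First, your comparison profile $\mu_\alpha^{(n-2)/2}\bigl(\mu_\alpha^2+d_g(x_\alpha,x)^2\bigr)^{(2-n)/2}$ is the $s=0$ Aubin--Talenti shape; the correct Hardy--Sobolev bubble is $B_\alpha$ of \eqref{def:bubble}, with exponents $2-s$ and center frozen at $\xo$ (the paper shows $d_g(\xo,x_\alpha)=\smallo(\mu_\alpha)$, so centering at $x_\alpha$ versus $\xo$ is harmless, and the two shapes are comparable as upper bounds --- but Lemma~\ref{lem:poho} needs the sharp matching $|u_\alpha/B_\alpha-1|\le\varepsilon$ of \eqref{acontrol2} against the exact profile $U$ of \eqref{def:B0}, not merely a one-sided bound). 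Second, compactness away from $\xo$ ``via elliptic estimates and the Harnack inequality'' is not self-contained without energy bounds: the paper's Lemma~\ref{P1} needs a weighted rescaling argument and the Gidas--Spruck Liouville theorem for the subcritical exponent $\crits-1$, which is where $s>0$ genuinely enters. Conversely, the obstacle you anticipate --- next-order corrections to the bubble for the Pohozaev boundary terms --- does not arise: once $r_\alpha$ is bounded below, the boundary contributions are $\bigO(\mu_\alpha^{n-2})$, negligible against the interior terms of order $\mu_\alpha^{2}$ (with a logarithm when $n=4$), so the leading-order $\varepsilon$-control suffices.
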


Note, we do not assume any $H^{2}_{1}(M)$ bound on solutions. The condition on the scalar curvature is optimal and has appeared before in Jaber \cite{J1}, and in \cite{HCA4}, where the first author arrived at the same conclusion for a sequence close to the single bubble profile. Very recently, the authors in \cite{CAR} constructed a sequence of solutions of \eqref{eqn:HS1} blowing up with the profile of a bubble such that $h\to h_{0}$ in $C^{p}(M)$, $p\ge2$, where $h_{0}(x_{0})=\mathfrak{c}_{n,s}\sg(x_{0})$ for $n\ge7$. Also see \cite{chen} for the construction of a blowing up sequence with perturbations of the nonlinear power. 
\smallskip

\noindent
Our proof is based on a priori asymptotic analysis of sequences of blowing-up solutions $(u_{\alpha})_{\alpha\in\mathbb{N}}$ of \eqref{eqn:HS1}. Compared to the case $s=0$, the presence of the singularity creates many simplifications and we show that the blow up is highly localized at $x_{0}$ and can be controlled globally by the canonical bubble centred at $x_{0}$.  Then, using a Pohozaev identity we obtain a balancing condition contradicting the assumptions of our Theorem. We effectively reduce the analysis of multi-bubble blow-up solutions to the study of a single bubble-type configuration. It seems our Theorem \ref{main:theo1} is the first general compactness result for the Hardy-Sobolev equation on manifolds.
\smallskip

The so-called {\emph mass-function}  $m_h$ of $\Delta_g +h$ is defined as follows: let $G_{\xo}:M\backslash\{\xo\}\to \R$ be the Green's function of the coercive operator $\Delta_g +h$. For $n=3$ we have the following expansion around the pole at $x_{0}$
\begin{equation}\label{green0}
G_{h}(\xo,x)=\frac{1}{4 \pi d_g(\xo,x)}+\beta(x_0,x) \bb{ for all } x\in M\backslash \{\xo\},
\end{equation}
for some $\beta(\xo,\cdot) \in C^{2}(M\backslash\{\xo\})\cap C^{0,\vartheta}(M)$ where $\vartheta \in(0,1)$, and we then define $m_h(\xo) = \beta(\xo,\xo)$. For an expansion of the Green's function, see, for example, Appendix A of Druet, Hebey, and Robert \cite{DHR}, and Robert \cite{Rgreen}. By  the positive mass theorem of Schoen and Yau \cite{SY} one has $m_{\sg/8}(\xo)\ge0$ and  $m_{\sg/8}(\xo)=0$ if and only $(M,g)$ is conformally diffeomorphic to the unit $3$-dimensional sphere $\mathbb{S}^{3}$ with the spherical metric $g_{std}$. 
Then applying the maximum principle, Theorem \ref{main:theo1} implies the following as in the case $s=0$.



\begin{corollary}\label{main:coro}
Let $(M,g)$ be a closed and smooth Riemannian manifold of dimension $n=3$ with positive scalar curvature $\sg>0$ in $M$. Fix $\xo \in M$ and $s\in (0,2)$. Let $h \in C^{1}(M)$ be such that the operator $\Delta_{g}+ h$ is coercive and  $h\le \sg/8$ in $M$ but  $h(x_{0})<\sg(x_{0})/8$ Then, there exists a constant $\Lambda=\Lambda(M,g,s, h)$ such that any solution $u \in C^{0}(M)\cap C^{2}(M \setminus \{\xo\})$ 
of \eqref{eqn:HS1} satisfies the bound \eqref{main bd}. 
\end{corollary}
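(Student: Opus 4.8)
The plan is to derive the Corollary directly from the $n=3$ case of Theorem~\ref{main:theo1}. Since that theorem requires only $m_h(\xo)\neq 0$, it suffices to show that the sign conditions $h\le\frac18\sg$ and $h(\xo)<\frac18\sg(\xo)$, together with $\sg>0$, force $m_h(\xo)>0$. The first observation I would use is that in dimension $n=3$ the operator $\dg+\frac18\sg$ is precisely the conformal Laplacian, because $\frac{n-2}{4(n-1)}=\frac18$; as $\sg>0$ the Yamabe invariant of $(M,g)$ is positive, so this operator is coercive, admits a positive Green's function $G_{\sg/8}$ with an expansion of the form \eqref{green0}, and, by the positive mass theorem of Schoen--Yau recalled above, satisfies $m_{\sg/8}(\xo)\ge 0$, with equality exactly when $(M,g)$ is conformally diffeomorphic to $\S^3$.

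Next I would establish a monotonicity of the mass in the potential. Writing $G_h$ for the Green's function of the coercive operator $\dg+h$ and setting $w:=G_h-G_{\sg/8}$, the two functions share the singular part $\frac{1}{4\pi d_g(\xo,\cdot)}$, so $w$ extends continuously across $\xo$ with $w(\xo)=m_h(\xo)-m_{\sg/8}(\xo)$. Using $(\dg+h)G_h=\delta_{\xo}$ and $(\dg+\frac18\sg)G_{\sg/8}=\delta_{\xo}$, the Dirac masses cancel and one finds
\begin{equation*}
\Big(\dg+\tfrac18\sg\Big)w=\Big(\tfrac18\sg-h\Big)G_h \quad\text{on } M.
\end{equation*}
Since $h\le\frac18\sg$ and $G_h>0$, the right-hand side is nonnegative, and by continuity of $h$ together with $h(\xo)<\frac18\sg(\xo)$ it is strictly positive on a punctured neighbourhood of $\xo$.

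I would then apply the maximum principle to the coercive operator $\dg+\frac18\sg$, whose zeroth-order coefficient $\frac18\sg$ is positive: the continuous supersolution $w$ is nonnegative, and since the right-hand side is not identically zero, the strong maximum principle upgrades this to $w>0$ on $M$, hence $w(\xo)=m_h(\xo)-m_{\sg/8}(\xo)>0$ and finally $m_h(\xo)>0$. It then remains only to verify the regularity hypothesis of Theorem~\ref{main:theo1}: a solution $u\in C^0(M)\cap C^2(M\setminus\{\xo\})$ is bounded, so its right-hand side is dominated by $d_g(\xo,\cdot)^{-s}\in L^p$ for $p<3/s$, and elliptic regularity upgrades $u$ to $C^{0,\theta}(M)$ for $0<\theta<\min\{1,2-s\}$; Theorem~\ref{main:theo1} then yields the bound \eqref{main bd}.

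The hard part will be the maximum principle step at the puncture $\xo$, where $w$ is only continuous and the right-hand side $(\frac18\sg-h)G_h$ blows up like $d_g(\xo,\cdot)^{-1}$. One must check that $\xo$ is a removable singularity for this supersolution, for instance by noting that the right-hand side lies in $L^p$ for some $p<3$, so $w\in W^{2,p}_{\mathrm{loc}}$ near $\xo$ and the differential inequality, together with the strong maximum principle, persists across $\xo$. This strict positivity, rather than merely $w(\xo)\ge 0$, is exactly what is needed to cover the borderline case in which $(M,g)$ is conformally diffeomorphic to $\S^3$ and $m_{\sg/8}(\xo)=0$.
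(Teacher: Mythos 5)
Your argument is correct and is precisely the proof the paper has in mind: the paper disposes of the Corollary with the single remark that ``applying the maximum principle, Theorem \ref{main:theo1} implies the following as in the case $s=0$,'' which is exactly your comparison $w=G_h-G_{\sg/8}$ for the conformal Laplacian $\dg+\frac18\sg$ (coercive since $\sg>0$), combined with the Schoen--Yau positive mass theorem $m_{\sg/8}(\xo)\ge0$ and the strict inequality $h(\xo)<\frac18\sg(\xo)$ to force $m_h(\xo)>0\ne0$, at which point Theorem \ref{main:theo1} applies. Your two supplementary points (removability at the puncture and the upgrade of $u$ from $C^0(M)$ to $C^{0,\theta}(M)$, $0<\theta<\min\{1,2-s\}$) are details the paper leaves implicit and are handled correctly, with the one caveat that the strong maximum principle for strong solutions is standard only in $W^{2,p}_{\loc}$ with $p\ge n$, so at $\xo$ it is cleaner to invoke the weak Harnack inequality for $H^1$ supersolutions of the divergence-form operator, or simply the representation $w(\xo)=\int_M G_{\sg/8}(\xo,y)\left(\tfrac18\sg-h\right)(y)\,G_h(\xo,y)\, dv_g(y)>0$, which gives the needed strict positivity in the borderline case $m_{\sg/8}(\xo)=0$.
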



 \smallskip 

\noindent
This paper is organized as follows. In Section~\ref{PN}, we recall some preliminary results that will be used throughout the paper. The proofs of the main Theorem ~\ref{main:theo1} and Corollary \ref{main:coro} are given in Section~\ref{proofmaintheo}. In Section~\ref{controlblowup} we perform a blow-up analysis and establish Proposition~\ref{main:prop}. In particular, necessary conditions for the blow-up of the sequence $(u_\alpha)_{\alpha \in \mathbb{N}}$ are obtained via an appropriate Pohozaev identity.

All convergences are up to a subsequence, and $C$ will denote a generic constant that depends on $n$, $s$, and possibly $(M,g)$.

\section{Preliminaries and notations}\label{PN}

$(M,g)$ will denote a closed and smooth Riemannian manifold of dimension $n\geq 3$. The linear space $\R^n$ will be endowed with the canonical Euclidean metric. On the tangent bundle of $M$, the exponential map $\exp: \tm\to M$ is smooth and there exists an injectivity radius $r_0>0$ such that for any $\xo\in M$ the map
\begin{equation*}
	\exp_{\xo}|_{B(0,r_0)}:B(0,r_0)\mapsto B_{g}(\xo,r_0)
\end{equation*}
is a diffeomorphism. Here $B(0,r_0)$ denotes the ball of radius $r_0$ centered at $0$ in $\R^n$, and $B_g(\xo,r_0)=\{y\in M \bb{ such that } d_g(y,x_0)<r_0\}$ denotes the geodesic ball of radius $r_0$ centered at $\xo$. Also, $\ig:=i_g(M)$ will denote the injectivity radius of $(M,g)$ and $\Delta_{g}:=-\diver_{g}(\nabla)$  is the Laplace-Beltrami operator with minus sign convention. Recall
\begin{equation*}
	-\left( \Delta_{g}-\Delta_{\xi}\right)  = \left( g^{ij}-\delta^{ij}\right) \partial_{ij}- g^{ij}\Gamma_{ij}^k\partial_k,
\end{equation*} 
where $\Delta_{\xi}$ denotes the usual Euclidean laplacian, and  $\Gamma_{ij}^k$ are the Christoffel symbols and for all $i,j,k,m \in \{1,...,n\}$
\begin{equation*}
\Gamma_{ij}^k=\frac{1}{2}g^{km}\left( \partial_ig_{jm}+\partial_jg_{im}-\partial_mg_{ij}\right).
\end{equation*}
\smallskip

\noindent
Coming back to our Hardy-Sobolev equation,  we define for $\mu>0$
\begin{equation*}
	U_{\mu}(X):=\mu^{\frac{2-n}{2}}U\left( \mu^{-1}X\right) \bb{ for }  X\in \R^n,
\end{equation*}
where $U$ is the standard {\it bubble} in $\R^{n}$ given by
\begin{equation}\label{def:B0}
	U(X):=\left(\frac{1}{1+\bns|X|^{2-s}} \right)^{\frac{n-2}{2-s}} \bb{ for } X\in \R^n, \bb{ with }\bns^{-1}:=(n-s)(n-2).
\end{equation}
It follows from \cite{Chouchu}  that any function $V\in C^{0}_{loc}(\R^{n})\cap C^{2}_{\loc}(\R^{n}\setminus\{0\})$ satisfying:
\begin{equation}\label{eq:1}
\left\{\begin{array}{ll}
\Delta_{\xi}\,V=\dfrac{V^{\crits-1}}{|X|^s} & \bb{ in } \R^n \backslash \{0\}, \\
V > 0  &\hbox{ in } \R^n \backslash \{0\},
\end{array}\right.
\end{equation}
equals  $U_{\mu}$ for some $\mu>0$. Here, $\Delta_{\xi}$ is the usual Euclidean Laplacian.  It follows that the set of solutions $U_{\mu}$ is non-compact. 
\smallskip

We remark here that, any nonnegative $u\in C^{0,\theta}(M)\cap C^{2}(M \setminus \{\xo\})$, with $0<\theta<\min\{1,2-s\}$, satisfying $\Delta_{g}u + hu = \dfrac{u^{\crits-1}}{d_{g}(\xo,x)^{s}}$ in $M\setminus\{\xo\}$ indeed satisfies the maximum principle, that is,  either $u>0$ in $M$ or $u\equiv0$ in $M$. See for instance Jaber \cite{J1} (Step 6 in Page 45).
\medskip

\section{Proof of main results}\label{proofmaintheo}
Once an a priori bound is established, our theorems will follow from standard elliptic estimates.
\smallskip

\noindent\emph{Proof of Theorem \ref{main:theo1}}: The upper bound follows from  {\it standard elliptic estimates} if there exists $C>0$ such that $\Vert u \Vert_{L^{\infty}(M)} \le C$ for any solution of \eqref{eqn:HS1}, since $s<2$.

Suppose there exists a sequence of solutions $(u_{\alpha})_{\alpha\in\N}$ of \eqref{eqn:HS1} such that  $\Vert u_{\alpha}\Vert_{L^{\infty}(M)}\to+\infty$ as $\alpha\to+\infty$, that is, there exists a blowing up sequence.  Then by Proposition \ref{main:prop} we obtain $h_{\infty}(\xo)=\mathfrak{c}_{n,s}\sg(\xo)$ when $n\ge4$, where $\mathfrak{c}_{n,s}$ is the constant defined in \eqref{def:c}, and in dimension $3$ we obtain that $m_{h}(\xo)=0$, where $m_{h}(\xo)$ is the mass of the operator $\dg+h$. This contradicts the assumptions of Theorem \ref{main:theo1}, thereby establishing the upper bound.\par
\noindent
To obtain a uniform bound on $\left\|1/u\right\|_{C^{0}(M)}$ we again proceed by contradiction. Assume that there exists a sequence of solutions $(u_{\alpha})_{\alpha\in\N}$ of \eqref{eqn:HS1} and a sequence of points $(x_{\alpha})_{\alpha}\in M$ such that $u_{\alpha}(x_{\alpha})\to0$ as $\alpha\to+\infty$. Since the sequence of solutions $(u_{\alpha})_{\alpha\in\N}$ of \eqref{eqn:HS1} is such that  $\Vert u_{\alpha}\Vert_{L^{\infty}(M)}\le C$  for some constant $C>0$. We then obtain the existence of $u_{\infty}\in C^{0,\theta}(M)\cap C^{2}(M \setminus \{\xo\})$, where $0<\theta<\min\{1,2-s\}$,   such that  $u_{\alpha}\to u_{\infty}$ in $C^{0,\hat{\theta}}(M)\cap C^{2}_{\loc}(M \setminus \{\xo\})$ with $0\leq\hat{\theta}<\theta$. Furthermore $u_{\infty}\ge0$ in $M$  and satifisfies the equation:
\begin{align*}
\Delta_{g}u_{\infty} + h u_{\infty} = \dfrac{u_{\infty}^{\crits-1}}{d_{g}(\xo,x)^{s}}   & \hbox{ in }M\setminus\{\xo\}.
\end{align*}
Note that since for each $\alpha\ge1$ the quantity $d_{g}(x_{0},x)|\nabla \ua(x)|=\smallo(1)$ as $x\to x_{0}$, we also obtain in this case
\begin{align}
u_{\alpha}\to u_{\infty} \hbox{ in } H_1^2(M) \bb{ as } \alpha\to +\infty.
\end{align}
Since the operator $\Delta+h_{\infty}$ is coercive, so along the sequence of solutions $(u_{\alpha})_{\alpha\in\N}$ we obtain by applying the Hardy–Sobolev inequality that
\begin{align*}
\int_{M}\frac{u_{\alpha}^{\crits}}{d_{g}(\xo,x)^{s}}\,dv_{g}\ge C,
\end{align*}
for some $C>0$ independent of $\alpha$. The strong convergence $\lim\limits_{\alpha\to+\infty}u_{\alpha}\to u_{\infty}$ in $C^{0}$ then gives us $\displaystyle \int_{M}\frac{u_{\infty}^{\crits}}{d_{g}(\xo,x)^{s}}\,dv_{g}\ge C$ and this implies $u_{\infty}>0$ in $M$ by the maximum priciple as stated at the end of Section $2$. This prohibits the existence of a sequence such that $u_{\alpha}(x_{\alpha})\to0$ as $\alpha\to+\infty$ and this gives the uniform lower bound.
\hfill\qed
\medskip


\section{Controlling the Blow-up}\label{controlblowup}

Our aim in this section is to extract a necessary condition for blow-up to occur. This is presented in the following proposition. 
\begin{prop}\label{main:prop}
 Let $(M,g)$ be a closed and smooth Riemannian manifold of dimension $n\geq 3$, fix $\xo \in M$ and $s\in (0,2)$. Let $h_{\infty} \bb{ in } C^1(M)$ be such that the operator $\Delta_{g}+ h_{\infty}$ is {\it coercive}, and $h_{\infty}(x_{0})\le\mathfrak{c}_{n,s} \sg(\xo)$ when $n\ge4$ with $\mathfrak{c}_{n,s}$ given by \eqref{def:c}.
Consider a sequence of functions $(h_{\alpha})_{\alpha\in \N}\in C^1(M)$ such that $\lim\limits_{\alpha\to +\infty}h_{\alpha}= h_{\infty}$  in $C^1(M)$ and suppose  $(u_{\alpha})_{\alpha\in\N}\in C^{0,\theta}(M)\cap C^{2}(M \setminus \{\xo\})$, with $0<\theta<\min\{1,2-s\}$ satisfies:
\begin{equation}\label{eqn:HSua}
	\left\{\begin{array}{ll}
	\Delta_{g}u_{\alpha} + h_{\alpha}u_{\alpha} = \dfrac{u_{\alpha}^{\crits-1}}{d_{g}(\xo,x)^{s}}   & \hbox{ in }M\setminus\{\xo\},  \\
		\quad u_{\alpha} > 0  &\hbox{ in }M\setminus\{\xo\}.
	\end{array}\right.
\end{equation}
If  $(\ua)_{\alpha\in \N}$ {\it blows-up},  that is $\lim \limits_{\alpha \to+\infty} \Vert u_{\alpha}\Vert_{L^\infty(M)} =+\infty$, then 
 \begin{itemize}
 \item[$(i)$]  $h_{\infty}(\xo)=\mathfrak{c}_{n,s}\sg(\xo)$ when $n\ge4$.
 \item[$(ii)$] $m_{h_{\infty}}(\xo)=0$ when $n=3$.
\end{itemize}
Here $m_{h_{\infty}}(\xo)$ is the mass of the operator $\dg+h_{\infty}$ defined in \eqref{green0}, and $\mathfrak{c}_{n,s}$ is given by \eqref{def:c}. 
\end{prop}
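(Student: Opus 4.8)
The plan is to establish Proposition~\ref{main:prop} via a blow-up analysis localized at the singular point $\xo$, culminating in a Pohozaev identity that forces the stated balancing conditions. The overall strategy exploits the key simplification emphasized in the introduction: since $s>0$, the equation \eqref{eqn:HSua} is subcritical away from $\xo$, so blow-up \emph{must} concentrate at $\xo$ itself. First I would show that if $\Vert u_\alpha\Vert_{L^\infty(M)}\to+\infty$, then the maximum of $u_\alpha$ is attained at points $x_\alpha\to\xo$. Setting $\mu_\alpha := \Vert u_\alpha\Vert_{L^\infty}^{-2/(n-2)}\to 0$ as the natural scaling parameter, I would rescale via the exponential chart: define $\utza(X):=\mu_\alpha^{(n-2)/2}u_\alpha(\eo(\mu_\alpha X))$. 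Standard elliptic theory together with the classification \eqref{eq:1} from \cite{Chouchu} then gives $\utza\to U$ in $C^2_{\loc}$, where $U$ is the canonical bubble \eqref{def:B0} centred at $0$. The fact that the singular weight $d_g(\xo,x)^{-s}$ rescales compatibly with the Hardy-Sobolev exponent $\crits$ is what pins the blow-up at $\xo$ and rules out bubbles forming elsewhere or off-center.

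\medskip

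\textbf{Sharp pointwise control.} The technical heart of the argument is to prove a global pointwise estimate of the form $u_\alpha(x)\le C\,\mu_\alpha^{(n-2)/2}\,\big(\mu_\alpha + d_g(\xo,x)\big)^{2-n}$ (suitably interpreted via the bubble profile), i.e. that $u_\alpha$ is controlled everywhere by the single canonical bubble centred at $\xo$. I would obtain this by a maximum-principle comparison argument against the rescaled bubble, using coercivity of $\Delta_g+h_\infty$ and the Green's function of the operator to build a supersolution. This is where the a priori arbitrary multi-bubble structure collapses: because all blow-up is forced to $\xo$ and the weight is singular only there, the entire bubble-tree reduces to one standard bubble, and I expect this \emph{is the main obstacle}, since ruling out slower-decaying tails and secondary concentration points in full generality (with no energy bound assumed) requires care in iterating the pointwise estimates up to the sharp decay exponent.

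\medskip

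\textbf{Pohozaev identity and conclusion.} With the sharp control in hand, I would apply a Pohozaev-type identity on a small geodesic ball $B_g(\xo,\delta)$ to the rescaled solution. The idea is to multiply \eqref{eqn:HSua} by $X^k\partial_k u_\alpha$ (the conformal vector field $x\cdot\nabla u$ in the chart) plus a zeroth-order term, integrate by parts, and compare the interior and boundary contributions. The boundary terms, evaluated using the sharp pointwise bound and the expansion of the metric $g^{ij}=\delta^{ij}-\tfrac13 R_{iklj}x^kx^l+\bigO(|x|^3)$, produce a curvature term proportional to $\sg(\xo)$ weighted by $\mathfrak{c}_{n,s}$, while the bulk term contributes $h_\infty(\xo)$. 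For $n\ge 4$ the leading balance yields the relation $\big(h_\infty(\xo)-\mathfrak{c}_{n,s}\sg(\xo)\big)\cdot\big(\text{positive integral}\big)=0$, and since the hypothesis already gives $h_\infty(\xo)\le\mathfrak{c}_{n,s}\sg(\xo)$ the sign of the integral forces equality, giving $(i)$. For $n=3$ the scalar-curvature term no longer dominates and instead the constant term in the Green's function expansion \eqref{green0} survives; tracking this through the Pohozaev balance shows that the mass coefficient must vanish, i.e. $m_{h_\infty}(\xo)=0$, giving $(ii)$. The delicate point here is computing the precise constant $\mathfrak{c}_{n,s}$ from \eqref{def:c}: this requires the exact value of the integral $\int_{\R^n}\tfrac{U^{\crits}}{|X|^s}$ and the second-order term in the expansion of the bubble, and matching it against the curvature coefficient from the metric expansion, which is precisely the computation carried out in \cite{HCA4} and shown optimal in \cite{CAR}.
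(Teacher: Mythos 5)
Your skeleton coincides with the paper's (localization of the blow-up at $\xo$, convergence of the rescalings to the bubble $U$, then a Pohozaev identity whose bulk terms produce $h_\infty(\xo)-\mathfrak{c}_{n,s}\sg(\xo)$ for $n\ge4$ and the mass for $n=3$), but your middle step --- the ``sharp pointwise control'' --- has a genuine gap, and it is exactly the step the paper calls its core technical result. You propose to prove $u_\alpha\le C B_\alpha$ by a maximum-principle comparison with a supersolution built from coercivity and the Green's function. This cannot work as stated: with no energy bound, Lemma \ref{P1} only excludes concentration \emph{away} from $\xo$; a tower of bubbles at $\xo$ itself (a second, wider bubble under the one you captured) is still possible a priori, and the single-bubble upper bound is precisely equivalent to excluding such towers. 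Coercivity of $\Delta_g+h_\infty$ and Green's-function supersolutions are insensitive to the \emph{local} sign condition $h_\infty(\xo)\le\mathfrak{c}_{n,s}\sg(\xo)$ --- note that your proposal never invokes this hypothesis before the final Pohozaev balance --- yet it must enter the control step, since tower/cluster blow-up is expected when $\varphi_\infty(\xo)>0$. The paper's Proposition \ref{P3} supplies the missing mechanism: it defines the radius of influence $r_\alpha$ (the largest radius on which $|u_\alpha/B_\alpha-1|\le\eps$), assumes $r_\alpha\to0$, shows $w_\alpha(X):=r_\alpha^{n-2}\mu_\alpha^{(2-n)/2}u_\alpha(\exp_{\xo}(r_\alpha X))\to \mathscr{A}|X|^{2-n}+\mathscr{H}(X)$ with $\mathscr{H}$ harmonic, proves $\mathscr{H}\ge0$ from the Green's representation formula and $\mathscr{H}(0)\le0$ from a Pohozaev identity at scale $r_\alpha$ \emph{using} $\varphi_\infty(\xo)\le0$; then $\mathscr{H}\equiv0$, and the saturation $u_\alpha(y_\alpha)=(1+\eps)B_\alpha(y_\alpha)$ at distance $r_\alpha$ contradicts the resulting convergence to the pure fundamental solution. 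Your supersolution idea has no substitute for this sign input, so it cannot rule out the slowly-decaying tail (the case $\mathscr{H}(0)>0$) that a second concentration scale would create.

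Two secondary points. First, for the Pohozaev step you only assert a one-sided bound $u_\alpha\le C B_\alpha$, but in dimension $n=4$ the curvature term diverges logarithmically (integrals of the type $\int |X|^2|\nabla U|^2\,dX$ are infinite for $n=4$), and extracting the coefficient of $\mu_\alpha^2\ln(\kappa_\alpha/\mu_\alpha)$ requires the two-sided closeness \eqref{acontrol2} on a region with $\kappa_\alpha/\mu_\alpha\to+\infty$, cf.\ \eqref{key4}; dominated convergence from the upper bound alone suffices only for $n\ge5$. Second, your closing logic (``since the hypothesis already gives $h_\infty(\xo)\le\mathfrak{c}_{n,s}\sg(\xo)$ the sign of the integral forces equality'') misplaces where the hypothesis acts: once the sharp control holds on a ball of \emph{fixed} radius $\rho_\eps$, the balance in Lemma \ref{lem:poho} forces $\varphi_\infty(\xo)=0$ outright, with no sign condition needed at that stage; the hypothesis is consumed earlier, in establishing the control itself.
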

\noindent
Note that coercivity is a necessary condition for the existence of positive solutions of \eqref{eqn:HSua} (see Appendix B of \cite{DHR}).
\smallskip

\noindent
We prove this proposition in multiple steps, starting with the following lemma, which implies that only $\xo$ can be the point of blow-up. 
\begin{lemma}\label{P1}
 Let $(M,g)$ be a closed and smooth Riemannian manifold of dimension $n\geq 3$, fix $\xo \in M$ and $s\in (0,2)$.  Let $(h_{\alpha})_{\alpha\in \N}$ be as in the statement of Proposition \ref{main:prop}. There exists  $C>0$ such that for any sequence $(u_{\alpha})_{\alpha\in\N}\in C^{0,\theta}(M)\cap C^{2}(M \setminus \{\xo\})$ satisfying  equation \eqref{eqn:HSua}, with $0<\theta<\min\{1,2-s\}$, we have 
\begin{align}\label{es1}
d_g(\xo,x)^{\frac{n-2}{2}}u_{\alpha}(x)\leq C~\hbox{ for all } x \in M.
\end{align}
\end{lemma}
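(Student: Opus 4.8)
The plan is to argue by contradiction and run a blow-up analysis at the maximum point of the scale-invariant weight $w_\alpha(x) := d_g(\xo,x)^{\frac{n-2}{2}} u_\alpha(x)$, exploiting that the effective nonlinearity is \emph{subcritical} away from the singularity. Since $u_\alpha \in C^{0,\theta}(M)$ and $d_g(\xo,\cdot)^{(n-2)/2}$ vanishes at $\xo$, the function $w_\alpha$ extends continuously to all of $M$ with $w_\alpha(\xo)=0$; as $M$ is compact it attains its maximum $\na := \max_M w_\alpha$ at some $y_\alpha \neq \xo$. Suppose, for contradiction, that $\na \to +\infty$ along a subsequence, and set $\ra := d_g(\xo,y_\alpha)>0$ and $m_\alpha := u_\alpha(y_\alpha) = \na\,\ra^{-(n-2)/2}$.

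Next I would introduce the blow-up scale tuned to balance the nonlinearity, namely $\ell_\alpha := \ra\,\na^{-\frac{2-s}{n-2}}$. A direct computation gives $\ell_\alpha/\ra = \na^{-(2-s)/(n-2)} \to 0$ (so the singularity $\xo$ recedes to infinity at this scale, $\ra/\ell_\alpha \to +\infty$) and $\ell_\alpha \to 0$, so that the rescaled metric flattens. I then set
\begin{equation*}
v_\alpha(X) := m_\alpha^{-1}\, u_\alpha\big(\exp_{y_\alpha}(\ell_\alpha X)\big), \qquad X \in B\big(0,\, \delta\, \ra/\ell_\alpha\big),
\end{equation*}
for a fixed small $\delta>0$; then $v_\alpha(0)=1$, and the maximality of $w_\alpha$ at $y_\alpha$ together with $d_g(\xo,\exp_{y_\alpha}(\ell_\alpha X)) = \ra(1+o(1))$ on compact sets yields $0 < v_\alpha \le 1 + o(1)$ locally uniformly. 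Writing \eqref{eqn:HSua} in these coordinates, the choice of $\ell_\alpha$ normalizes the nonlinear coefficient while the singular weight contributes the factor $\big(\ra/d_g(\xo,\exp_{y_\alpha}(\ell_\alpha X))\big)^s \to 1$, so that
\begin{equation*}
\Delta_{\hat{g}_\alpha} v_\alpha + \ell_\alpha^2\, h_\alpha\big(\exp_{y_\alpha}(\ell_\alpha \cdot)\big)\, v_\alpha = (1+o(1))\, v_\alpha^{\crits - 1},
\end{equation*}
where $\hat{g}_\alpha \to \xi$ (the Euclidean metric) in $C^2_{\loc}$ and $\ell_\alpha^2 h_\alpha \to 0$.

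With these uniform local bounds I would invoke the Harnack inequality and standard elliptic (Schauder) estimates to pass to the limit: up to a subsequence $v_\alpha \to v$ in $C^2_{\loc}(\R^n)$, where $v \ge 0$ is bounded, $v(0)=1$, and $\de v = v^{\crits - 1}$ on $\R^n$. The crucial point is that $s>0$ forces $\crits - 1 = \tfrac{n+2-2s}{n-2}$ to lie strictly in the subcritical range $\big(1, \tfrac{n+2}{n-2}\big)$. Hence the classical Liouville theorem of Gidas--Spruck (respectively Caffarelli--Gidas--Spruck) for the subcritical equation $-\Delta v = v^{p}$ with $1<p<\tfrac{n+2}{n-2}$ forces $v \equiv 0$, contradicting $v(0)=1$. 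This contradiction shows that $\na = \max_M w_\alpha$ stays bounded, which is precisely \eqref{es1}.

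I expect the main obstacle to be the bookkeeping in the rescaling rather than any deep difficulty: one must verify that $\ell_\alpha \to 0$ and $\ra/\ell_\alpha \to +\infty$ in all regimes (whether or not $\ra \to 0$, using only $\ra \le \operatorname{diam}(M)$), that the balls $B(0,\delta \ra/\ell_\alpha)$ expand to $\R^n$ while remaining inside the injectivity radius, and that the error terms in $\hat{g}_\alpha$ and in $d_g(\xo,\exp_{y_\alpha}(\ell_\alpha X))^{-s}$ are controlled locally uniformly so that the limiting equation is exactly $\de v = v^{\crits-1}$. Once these routine points are settled, the subcritical Liouville theorem closes the argument cleanly; this is the precise manifestation of the remark in the introduction that, for $s>0$, the problem is compact on domains not containing $\xo$.
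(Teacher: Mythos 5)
Your proposal is correct and follows essentially the same route as the paper: your scale $\ell_\alpha = r_\alpha N_\alpha^{-\frac{2-s}{n-2}}$ coincides algebraically with the paper's $\ell_\alpha = d_g(x_0,y_\alpha)^{s/2}\nu_\alpha^{\frac{2-s}{2}}$ (with $\nu_\alpha = u_\alpha(y_\alpha)^{2/(2-n)}$), and both arguments rescale at the maximum point of $d_g(x_0,\cdot)^{\frac{n-2}{2}}u_\alpha$, show the singular weight $\bigl(d_g(x_0,y_\alpha)/d_g(x_0,\exp_{y_\alpha}(\ell_\alpha X))\bigr)^s \to 1$ locally uniformly, and conclude with the subcritical Gidas--Spruck Liouville theorem, valid precisely because $s>0$. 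The only differences are cosmetic (your bookkeeping via $N_\alpha$ versus the paper's via $\nu_\alpha$, and your unnecessary but harmless invocation of Harnack, where the uniform bound $v_\alpha \le 1+o(1)$ plus standard elliptic estimates already suffice).
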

\begin{proof}
We proceed by contradiction: assume that there exists a sequence $(y_{\alpha})_{\alpha\in \N}\in M$ such that 
\begin{equation}\label{cond:initial}
\sup_{M}\(d_g(\xo,x)^{\frac{n-2}{2}}u_{\alpha}(x)\)=:d_g(\xo,y_{\alpha})^{\frac{n-2}{2}}u_{\alpha}(y_{\alpha})\to+\infty,
\end{equation}
as $\alpha\to +\infty$. Since $M$ is compact, this implies
\begin{equation}\label{uatoinf}
\lim_{\alpha\to +\infty} \ua(y_\alpha)=+\infty.
\end{equation}
Now, let $\nna:=u_{\alpha}(y_{\alpha})^{2/(2-n)}$ for all $\alpha\geq 1$. Then from \eqref{cond:initial} and \eqref{uatoinf} it follows that
\begin{equation*}
\lim \limits_{\alpha\to+\infty} \frac{d_{g}(\xo,y_{\alpha})}{\nna}=+\infty \bb{ and }\lim \limits_{\alpha\to+\infty}\nna=0.
\end{equation*}
Again, letting $\ell_{\alpha}:= d_{g}(\xo,y_{\alpha})^{s/2}\nna^{\frac{2-s}{2}}$ for all $\alpha >0$, since $M$ is compact, we have
\begin{equation}\label{convlalpha}
\lim \limits_{\alpha\to+\infty} \frac{d_{g}(\xo,y_{\alpha})}{\ell_{\alpha}}=+\infty \bb{ and }\lim \limits_{\alpha\to+\infty}\ell_{\alpha}=0.
\end{equation}
We rescale and define for all $\alpha\in \N$
\begin{equation*}
	\bar{v}_{\alpha}(X):= \nna^{\frac{n-2}{2}}u_{\alpha}\left(\exp_{y_{\alpha}}(\ell_{\alpha} X)\right) \bb{ for all  }X\in  B(0,\ig/2\ell_{\alpha}).
\end{equation*}
Here $\exp_{y_{\alpha}}$ is the exponential chart at $y_\alpha$ with respect to the metric $\bar{g}_{\alpha}:=\exp_{y_{\alpha}}^{*} g(\ell_{\alpha}\cdot)$  for all $\alpha\geq 1$. 
\smallskip

\noindent
 From the definition we have that  $\bar{v}_{\alpha}(0)=1$ for all $\alpha\in \N$ and using \eqref{cond:initial} we obtain
\begin{align}\label{eq,bd}
\bar{v}_{\alpha}(X)\leq \left(\frac{d_{g}(\xo,y_{\alpha})}{d_{g}(\xo,\exp_{y_{\alpha}}(\ell_{\alpha}X))}\right)^{\frac{n-2}{2}}~ \hbox{~in }B(0,\ig/2\ell_{\alpha}).
\end{align}
Note for $R>0$ we have for $\alpha$ large
\begin{equation*}
|d_{g}(\xo,\exp_{y_{\alpha}}(\ell_{\alpha}X))-d_{g}(\xo,y_{\alpha})|\le\ell_{\alpha} R~ \bb{ for all } X\in B(0,R),
\end{equation*}
and with the help of \eqref{convlalpha} this gives us
\begin{equation}\label{small1}
	\lim_{\alpha\to + \infty}\left(\frac{d_{g}(\xo,y_{\alpha})}{d_{g}(\xo,\exp_{y_{\alpha}}(\ell_{\alpha}X))}\right)^{\frac{n-2}{2}}= 1.
\end{equation} 
Then in \eqref{eq,bd} we obtain that as  $\alpha \to +\infty$ 
\begin{align}\label{eq30}
	\bar{v}_{\alpha}(X)\leq  \left(\frac{d_{g}(\xo,y_{\alpha})}{d_{g}(\xo,\exp_{y_{\alpha}}(\ell_{\alpha}X))}\right)^{\frac{n-2}{2}}\leq 1+\smallo(1)~ \hbox{~in }B(0,R),
\end{align}
where $\smallo(1)\to 0$ as $\alpha\to +\infty$.
\smallskip

\noindent
From equation \eqref{eqn:HSua}, it then follows that for all $R>0$ the function $\bar{v}_{\alpha}$ satisfies the equation: 
\begin{align*}
\Delta_{\bar{g}_{\alpha}}\bar{v}_{\alpha}+\ell_{\alpha}^{2}h_{\alpha}(\exp_{y_{\alpha}}(\ell_{\alpha}X))\bar{v}_{\alpha}=\left(\frac{d_{g}(\xo, y_{\alpha})}{d_{g}(\xo, \exp_{y_{\alpha}}(\ell_{\alpha}x))}\right)^{s}\bar{v}_{\alpha}^{\,2^{*}(s)-1}\\
~\hbox{ in } B(0,R).
\end{align*}
By standard elliptic theory along with \eqref{convlalpha}, \eqref{small1} and \eqref{eq30}, we can get that $\bar{v}_{\alpha}\to \bar{v}_{R,\infty}$ in $C^{0}(B(0,R/2))\cap C^{2}_{\loc}(B(0,R/2)\setminus\{0\})$ as $\alpha\to+\infty$, where
\begin{equation*}
\left\{\begin{array}{ll}
\quad\Delta_{\xi}\,\bar{v}_{R,\infty}=\bar{v}_{R,\infty}^{\,2^{*}(s)-1}& \hbox{ in }B(0,R/2),  \\
0\le\bar{v}_{R,\infty}\le1=\bar{v}_{R,\infty}(0) &\hbox{ in }B(0,R/2).
\end{array}\right.
\end{equation*}
Letting $R\to +\infty$, we obtain $\bar{v}_{\infty}$ in $C^{2}_{\loc}(\R^{n}))$ satisfying
\begin{equation*}
\left\{\begin{array}{ll}
\quad\Delta_{\xi}\,\bar{v}_{\infty}=\bar{v}_{\infty}^{\,2^{*}(s)-1}& \hbox{ in }\R^{n},  \\
0\le\bar{v}_{\infty}\le1=\bar{v}_{\infty}(0) &\hbox{ in }\R^{n}.
\end{array}\right.
\end{equation*}
Since $s>0$ this is impossible by the Liouville type result of Gidas and Spruck \cite{GS}, and we get a contradiction. This proves \eqref{es1}.
\end{proof}
\begin{remark}
The previous proposition captures all blow-up profiles and says that the only blow-up point is the point $\xo$. By standard elliptic theory then there exists $u_{\infty}\in C^{0}(M)\cap C^{2}(M \setminus \{\xo\})$ such that 
\begin{equation}
	\lim_{\alpha\to +\infty} \ua = u_{\infty} \bb{ in }C_{loc}^{2}(M\setminus\{\xo\}).
\end{equation} 
\end{remark}

\noindent
Next, we show that the blowing-up sequence behaves like the rescalings of the {\emph{standard bubble}} around the blow-up point $\xo$.
\begin{lemma}\label{P2}
Let $(M,g)$ be a closed and smooth Riemannian manifold of dimension $n\geq 3$, fix $\xo \in M$ and $s\in (0,2)$.  Let $(h_{\alpha})_{\alpha\in \N}$ be as in the statement of Proposition \ref{main:prop} and consider $(u_{\alpha})_{\alpha\in \N}$ satisfying  equation \eqref{eqn:HSua} such that $\lim \limits_{\alpha \to+\infty} \Vert u_{\alpha}\Vert_{L^\infty(M)} =+\infty$. Then $\mu_{\alpha}:=u_{\alpha}(\xo)^{2/(2-n)}\to0$ as $\alpha\to+\infty$, and rescalings
\begin{equation}\label{defuta}
	\ut(X):=\mu_{\alpha}^{\frac{n-2}{2}}u_{\alpha}\(\exp_{\xo}(\ma X)\) \bb{ for all } X\in B(0,\ig/2\ma )
\end{equation}
satisfies
\begin{align}\label{strongconvua}
\lim_{\alpha\to +\infty}\ut= U~\hbox{ in } C_{loc}^{0}(\R^n)\cap C_{loc}^{2}(\R^{n}\setminus\{0\}).
\end{align}
Here $U$ is defined as in \eqref{def:B0} and satisfies the equation \eqref{eq:1}.
\end{lemma}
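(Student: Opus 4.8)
The plan is to establish the bubble profile at the blow-up point $\xo$ by the standard rescaling argument, now centred at $\xo$ itself rather than at an arbitrary maximum point. First I would set $\mu_\alpha := u_\alpha(\xo)^{2/(2-n)}$ and verify $\mu_\alpha \to 0$: since $u_\alpha$ blows up and, by Lemma~\ref{P1}, the only possible blow-up point is $\xo$, the supremum of $u_\alpha$ is attained (up to the scaling weight) near $\xo$, and combining $\Vert u_\alpha\Vert_{L^\infty}\to+\infty$ with the pointwise control $d_g(\xo,x)^{(n-2)/2}u_\alpha(x)\le C$ from \eqref{es1} forces $u_\alpha(\xo)\to+\infty$, hence $\mu_\alpha\to 0$. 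Here one should check that $u_\alpha(\xo)$ is comparable to $\Vert u_\alpha\Vert_{L^\infty}$, which again follows from \eqref{es1} localizing all the mass at $\xo$.

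Next I would analyze the rescaled functions $\ut$ defined in \eqref{defuta}. A direct computation shows that $\ut$ satisfies, in the rescaled metric $\hat g_\alpha := \exp_{\xo}^{*}g(\ma\,\cdot)$ on $B(0,\ig/2\ma)$, the equation
\begin{equation*}
\Delta_{\hat g_\alpha}\ut + \ma^{2}\,h_\alpha(\exp_{\xo}(\ma X))\,\ut = \frac{\ut^{\,\crits-1}}{|X|^{s}},
\end{equation*}
where the crucial point is that the distance weight rescales exactly: $d_g(\xo,\exp_{\xo}(\ma X)) = \ma|X|_{\hat g_\alpha}$ by the radial nature of the exponential chart, so the singular right-hand side becomes $\ut^{\,\crits-1}/|X|^{s}$ with the scale factor $\ma^{-s}\cdot\ma^{s}$ cancelling against the weight $\ma^{(n-2)\crits/2-n}$ coming from the power nonlinearity — one verifies $\tfrac{n-2}{2}(\crits-1)-\tfrac{n+2}{2}+s = -s$, i.e. the exponents are balanced precisely because $\crits = 2(n-s)/(n-2)$. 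Since $\ma^{2}h_\alpha\to 0$ and $\hat g_\alpha\to\xi$ in $C^{2}_{\loc}$, the potential term drops out in the limit.

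To pass to the limit I would use \eqref{es1} once more to get a uniform local bound $\ut(X)\le C|X|^{-(n-2)/2}$, which is locally $L^\infty$ away from the origin and, being integrable against the singular weight, gives enough control for elliptic estimates. By standard elliptic theory $\ut\to U_\infty$ in $C^{0}_{\loc}(\R^n)\cap C^{2}_{\loc}(\R^n\setminus\{0\})$, where $U_\infty$ solves $\Delta_\xi V = V^{\crits-1}/|X|^{s}$ with $V(0)=1$ and $V>0$. The classification result from \cite{Chouchu} cited after \eqref{eq:1} then forces $U_\infty = U_\mu$ for some $\mu>0$, and the normalization $U_\infty(0)=1$ together with $U(0)=1$ (from \eqref{def:B0}) pins down $\mu=1$, giving $U_\infty = U$.

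The main obstacle I expect is the regularity of $\ut$ near the origin. Away from $0$ the convergence is routine, but at the singular point the right-hand side $\ut^{\,\crits-1}/|X|^{s}$ is unbounded, so one cannot directly invoke interior Schauder estimates there; the bound \eqref{es1} only gives $\ut\le C$ near the origin (since $d_g(\xo,\cdot)$ is small), and one must combine this with $L^p$ theory for the singular weight (as in the regularity statement for \eqref{eqn:HS1} attributed to Jaber \cite{J1}) to obtain $C^{0}$ convergence up to $0$ and identify the value $U_\infty(0)=1$. Establishing this uniform-up-to-the-origin control, so that the limit genuinely satisfies $V(0)=1$ and the classification applies, is the delicate step.
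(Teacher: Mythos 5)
There is a genuine gap, and it sits exactly at the point you flag but do not resolve. Your argument needs, right at the start, that $u_\alpha(\xo)$ is comparable to $\Vert u_\alpha\Vert_{L^\infty(M)}$, and you assert that this "follows from \eqref{es1} localizing all the mass at $\xo$". It does not. Estimate \eqref{es1} gives only the upper bound $u_\alpha(x)\le C\, d_g(\xo,x)^{-(n-2)/2}$: it forces the maximum points $x_\alpha$ to satisfy $d_g(\xo,x_\alpha)=\bigO(\check{\mu}_{\alpha})$ with $\check{\mu}_{\alpha}:=u_\alpha(x_\alpha)^{2/(2-n)}$, but it provides no lower bound whatsoever on $u_\alpha(\xo)$ and is a priori compatible with $u_\alpha(\xo)=\smallo\big(\Vert u_\alpha\Vert_{L^\infty}\big)$ (a bubble whose centre sits at distance $\sim\check{\mu}_{\alpha}$ from $\xo$). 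The same unproven comparability resurfaces in your limit passage: after rescaling at scale $\mu_\alpha=u_\alpha(\xo)^{2/(2-n)}$, the bound \eqref{es1} yields $\ut(X)\le C|X|^{-(n-2)/2}$, which \emph{degenerates} at the origin — your parenthetical claim that "\eqref{es1} only gives $\ut\le C$ near the origin" is false, since $d_g(\xo,\cdot)^{-(n-2)/2}$ blows up there. So you have no uniform $L^\infty$ control of $\ut$ on a neighbourhood of $0$, the elliptic/$L^p$ machinery does not close (the rescaled right-hand side is only bounded by $C|X|^{-(n+2)/2}$ near $0$, which is merely $L^1_{\loc}$), and the normalization $V(0)=1$ cannot be extracted. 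In short, the comparability $u_\alpha(\xo)\sim\max_M u_\alpha$ is the actual content of the lemma, not a consequence of \eqref{es1}.

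The paper closes this gap by running the rescaling \emph{indirectly}: it rescales around $\xo$ (not around $x_\alpha$) but at the maximum scale $\check{\mu}_{\alpha}$, setting $\widetilde{u}_{\alpha}(X)=\check{\mu}_{\alpha}^{(n-2)/2}u_\alpha(\exp_{\xo}(\check{\mu}_{\alpha}X))$. This keeps the singular weight centred at $0$ \emph{and} gives the uniform bound $0\le\widetilde{u}_{\alpha}\le 1$ for free (with $\widetilde{u}_{\alpha}(\widetilde{X}_{\alpha})=1$ at a bounded point, by \eqref{es1}), so elliptic theory applies up to the origin without any delicate weighted regularity. The Chou--Chu classification then says the limit is $U_\mu$ for some $\mu>0$, necessarily centred at $0$ because of the weight $|X|^{-s}$; the two constraints $\widetilde{u}_{\infty}(0)=\mu^{(2-n)/2}\le 1$ and $\widetilde{u}_{\infty}(\widetilde{X}_{\infty})=1$ together force $\mu=1$ and $\widetilde{X}_{\infty}=0$, which \emph{proves} $\check{\mu}_{\alpha}^{(n-2)/2}u_\alpha(\xo)\to 1$, i.e. the comparability you assumed; only then may one replace $\check{\mu}_{\alpha}$ by $\mu_\alpha$ in \eqref{defuta}. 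Your computation of the rescaled equation and the exponent balance for $\crits$ is correct, and your use of the classification at the end is fine, but as written the proposal is circular: it presupposes at step one precisely what the two-step argument of the paper is designed to establish.
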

\begin{proof}
Indeed, we let  $(x_{\alpha})_{\alpha\in \N}\in M$ be a sequence such that 
$$u_{\alpha}(x_{\alpha}):=\max\limits_{x\in M}u_{\alpha}(x)\to+\infty~\hbox{ as }\alpha\to+\infty,$$
 and we take $\check{\mu}_{\alpha}:=u_{\alpha}(x_{\alpha})^{2/(2-n)}$. From Lemma \ref{P1} it follows that $d_{g}(\xo,x_{\alpha})=\bigO(\check{\mu}_{\alpha})$ as $\alpha\to+\infty$ and hence $\lim \limits_{\alpha\to+\infty}x_{\alpha}=\xo$. We rescale and define for $\alpha\ge1$ 
\begin{equation*}
\widetilde{u}_{\alpha}(X):=\check{\mu}_{\alpha}^{\frac{n-2}{2}}u_{\alpha}\big(\exp_{\xo}(\check{\mu}_{\alpha} X)\big) ~ \hbox{ for } X \in B(0,\iota_{g}/2\check{\mu}_{\alpha}).
\end{equation*}
where $\exp_{\xo}$ is the exponential chart at $\xo$ with respect to the metric $\widetilde{g}_{\alpha}:=\exp_{\xo}^{*} g(\check{\mu}_{\alpha}\cdot)$. It then follows that 
\begin{align*}
0\le\widetilde{u}_{\alpha}(x)\leq 1 \bb{ in }B(0,\iota_{g}/2\check{\mu}_{\alpha})~\hbox{ and } \widetilde{u}_{\alpha}(\widetilde{X}_{\alpha}) =1,
\end{align*}
where $\widetilde{X}_{\alpha}=\check{\mu}_{\alpha}^{-1}\exp_{\xo}^{-1}x_{\alpha}$. Again by lemma \ref{P1} we obtain $\widetilde{X}_{\infty}\in \R^{n}$ such that $\lim\limits_{\alpha\to+\infty}\widetilde{X}_{\alpha}=\widetilde{X}_{\infty}$. Next, from equation \eqref{eqn:HSua},  it follows that the rescaled functions $\widetilde{u}_{\alpha}$ satisfies for $\alpha$ large: 
\begin{align*}
\Delta_{\widetilde{g}_{\alpha}}\widetilde{u}_{\alpha}+\check{\mu}_{\alpha}^{2}h_{\alpha}(\exp_{\xo}(\check{\mu}_{\alpha}X))\widetilde{u}_{\alpha}=\frac{\widetilde{u}_{\alpha}^{\,2^{*}(s)-1}}{|X|^{s}} ~\hbox{ in } B(0,\iota_{g}/2\check{\mu}_{\alpha})\setminus\{0\}.
\end{align*}
By standard elliptic theory along with a diagonal argument, we get that $\widetilde{u}_{\alpha}\to\widetilde{u}_{\infty}$ in $ C_{loc}^{0}(\R^n)\cap C_{loc}^{2}(\R^{n}\setminus\{0\})$ as $\alpha\to+\infty$, where
\begin{align*}
\Delta_{\xi}\widetilde{u}_{\infty}=\frac{\widetilde{u}_{\infty}^{\,2^{*}(s)-1}}{|X|^{s}}~\hbox{ in } \R^{n}\setminus\{0\}, \hbox{ with } 0\le\widetilde{u}_{\infty}\leq 1 \bb{ in }\R^{n}.
\end{align*}
Note that $\widetilde{u}_{\infty}(\widetilde{X}_{\infty})=1$. From the classification result of Chou-Chu \cite{Chouchu} we obtain that
\begin{align*}
\widetilde{u}_{\infty}(X)=\frac{\mu^{\frac{n-2}{2}}}{(\mu^{2-s}+\mathfrak{b}_{n,s}|X|^{2-s})^{\frac{n-2}{2-s}}}~\hbox{ for some } \mu>0, 
\end{align*}
where $\mathfrak{b}_{n,s}$ is defined in \eqref{def:B0}. It follows $\mu^{\frac{2-n}{2}}=\widetilde{u}_{\infty}(0)\leq1$ and $1=\widetilde{u}_{\infty}(\widetilde{X}_{\infty})$ implies $\mu^{\frac{2-s}{2}}(\mu^{\frac{2-s}{2}}-1)+\mathfrak{b}_{n,s}|\widetilde{X}_{\infty}|=0$. Therefore
\begin{align*}
\widetilde{u}_{\infty}(X)=\(1+\mathfrak{b}_{n,s}|X|^{2-s}\)^{\frac{2-n}{2-s}}~\hbox{ and }  \widetilde{X}_{\infty}=0.
\end{align*}
This implies that  $d_{g}(\xo,x_{\alpha})=\smallo(\check{\mu}_{\alpha})$ as $\alpha\to +\infty$ and
\begin{align*}
1=\widetilde{u}_{\infty}(0)=\lim\limits_{\alpha \to+\infty}\widetilde{u}_{\alpha}(0)=\lim\limits_{\alpha \to+\infty}\check{\mu}_{\alpha}^{\frac{n-2}{2}}u_{\alpha}(\xo).
\end{align*}
Thus we can replace $\check{\mu}_{\alpha}$ with $\mu_{\alpha}$  in the above analysis, completing the proof of Lemma \ref{P2}.
\end{proof}
\medskip

We now define the {\it rescaled bubble} centered at $\xo$ with height $\sim \mu^{(2-n)/2}_{\alpha}$ as:
\begin{equation}\label{def:bubble}
B_{\alpha}(x):=\frac{\mu_{\alpha}^{\frac{n-2}{2}}}{(\mu_{\alpha}^{2-s}+\mathfrak{b}_{n,s}d_{g}(\xo,x)^{2-s})^{\frac{n-2}{2-s}}}
\end{equation}
where  $\mu_{\alpha}:=u_{\alpha}(\xo)^{2/(2-n)}$ and $\mathfrak{b}_{n,s}^{-1}=(n-s)(n-2)$. \par 
\smallskip\noindent It then follows from Lemma \ref{P2} that for any fixed $R>0$ we have $$u_{\alpha}=\(1+\smallo(1)\)B_{\alpha}\,\bb{ in } C^{0}(B_g(\xo, R\ma)) \bb{ as } \alpha \to +\infty.$$ 
Note, for each fixed $\alpha\ge1$ we have by the H\"older continuity of the $u_{\alpha}$, that  $d_{g}(x_{0},x)|\nabla \ua(x)|=\smallo(1)$ as $x\to x_{0}$. Then for any fixed $R>0$ we also obtain
\begin{align}\label{eq5}
\tua \to U\bb{ in } H_1^2(B(0,R)) \bb{ as } \alpha\to +\infty.
\end{align}
Here $0<\theta<\min\{1,2-s\}$.
\smallskip

\noindent
In fact,  we will show that the blowing up sequence $(u_{\alpha})_{\alpha\in \N}$ is well approximated by the standard bubble in a non-collapsing ball around $\xo$.  But first, assuming control by a bubble, we will use a  Pohozaev identity to get a balancing condition in case blow-up occurs.  For convinience we denote 
\begin{equation}\label{defpo}
	\left\{\begin{array}{ll}
	\varphi_{\alpha}(\xo):=h_{\alpha}(\xo)-\mathfrak{c}_{n,s}\sg(\xo),  \\\\
		\varphi_{\infty}(\xo):=h_{\infty}(\xo)-\mathfrak{c}_{n,s}\sg(\xo),
	\end{array}\right.
\end{equation}
where $\mathfrak{c}_{n,s}$ is defined in \eqref{def:c}.
\begin{lemma}\label{lem:poho}
Let $(M,g)$ be a closed and smooth Riemannian manifold of dimension $n\geq 3$, fix $\xo \in M$ and $s\in (0,2)$.  Let $(h_{\alpha})_{\alpha\in \N}$ be as in the statement of Proposition \ref{main:prop} and consider $(u_{\alpha})_{\alpha\in \N}$ satisfying  equation \eqref{eqn:HSua} such that $\lim \limits_{\alpha \to+\infty} \Vert u_{\alpha}\Vert_{L^\infty(M)} =+\infty$. Consider a sequence of positive radius $(\kappa_{\alpha})_{\alpha\in \N}\in (0,\iota_{g}/4)$ such that $\lim\limits_{\alpha\to +\infty} \dfrac{\kappa_{\alpha}}{\mu_{\alpha}}=+\infty$. For $R>0$, we assume that 
\begin{align}\label{acontrol1}
u_{\alpha}(x)&\le C B_{\alpha}(x)~\hbox{ for all }x\in B_g(\xo,4\kappa_{\alpha}),\notag\\
\kappa_{\alpha}|\nabla u_{\alpha}(x)|&\le C_{R} B_{\alpha}(x)~\hbox{ for all }x\in B_g(\xo,4\kappa_{\alpha})\setminus B_g(\xo,\kappa_{\alpha}/R),
\end{align}
for some constants $C>0$, $C_{R}>0$, and for any $\varepsilon \in (0,1)$
\begin{align}\label{acontrol2}
\left|\frac{u_{\alpha}(x)}{B_{\alpha}(x)}-1\right|\le\varepsilon \bb{ for all } x\in B_g(x_0,\kappa_\alpha).
\end{align}
Here $B_{\alpha}$ is the rescaled bubble defined above in \eqref{def:bubble}.\par  
\smallskip\noindent Then we have as $\alpha\to+\infty$
\begin{align}\label{blow rates}
\kappa_{\alpha}^2\ln\left( \frac{\kappa_{\alpha}}{\ma}\right)\varphi_{\alpha}(\xo)\omega_3\bn^{-\frac{4}{2-s}}&=\bigO\left(1\right) &\hbox{ if } n=4,\notag\\
 \varphi_{\alpha}(\xo)\int_{\R^n} U^{2}\, dX&=\bigO\left(\frac{\mu_{\alpha}^{n-4}}{\kappa_{\alpha}^{n-2}}\right)&\hbox{ if } n\geq 5.
\end{align}
Here, $\omega_{3}$ denotes the area of the unit sphere in $\R^3$, and $\bn$ is defined in \eqref{def:B0}. The function $U$ is given by \eqref{def:B0} and $\varphi_{\alpha}$ is defined in \eqref{defpo}.
\end{lemma}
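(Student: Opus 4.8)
The plan is to derive a Pohozaev identity in geodesic normal coordinates centred at $\xo$ and to read off the balancing relation by matching the leading interior integral against the boundary flux. Working in the exponential chart $\eo$, the distance to the pole satisfies $d_g(\xo,\eo(x))=|x|$, so the singular weight becomes the exact Euclidean weight $|x|^{-s}$. Using the operator discrepancy $\de-\dg$ recorded in Section~\ref{PN}, whose second-order coefficient $g^{ij}-\delta^{ij}$ vanishes to order $|x|^2$ and whose first-order coefficient vanishes to order $|x|$ at the origin, I would rewrite the equation pointwise as
\begin{equation*}
\de\ua=\frac{\ua^{\crits-1}}{|x|^s}-h_\alpha\ua+(\de-\dg)\ua\quad\text{in }B(0,\kappa_\alpha),
\end{equation*}
treating the last summand as a geometric correction. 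I would then test against the Pohozaev field $\mathcal{P}[\ua]:=x\cdot\nabla\ua+\frac{n-2}{2}\ua$ and integrate over the Euclidean ball $B(0,\kappa_\alpha)$, so that the left-hand side collapses to a pure boundary flux.

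Three features drive the computation. First, by the criticality of the Hardy--Sobolev exponent the identity $\frac{n-s}{\crits}=\frac{n-2}{2}$ holds, so the nonlinear term produces no interior integral and reduces to the boundary flux $\frac{1}{\crits}\int_{\partial B(0,\kappa_\alpha)}|x|^{-s}\ua^{\crits}(x\cdot\nu)\,d\sigma$. Second, integrating the linear term by parts yields the interior integral $\int_{B(0,\kappa_\alpha)}h_\alpha\ua^2$, up to a boundary contribution and a lower-order $\int(x\cdot\nabla h_\alpha)\ua^2$ correction. Third, and most importantly, the geometric term $\int_{B(0,\kappa_\alpha)}\mathcal{P}[\ua](\de-\dg)\ua$ must be expanded using the normal-coordinate expansions $g^{ij}=\delta^{ij}+\frac{1}{3}R_{ikjl}x^kx^l+\bigO(|x|^3)$ together with the corresponding expansion of $\frac{1}{\sqrt{|g|}}\partial_i(\sqrt{|g|}g^{ij})=\bigO(|x|)$; after taking angular averages, which convert the Riemann tensor into the Ricci tensor and then into the scalar curvature, this produces exactly $-\mathfrak{c}_{n,s}\sg(\xo)\int_{B(0,\kappa_\alpha)}\ua^2$ at leading order. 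Combining the linear and geometric interior terms and replacing $h_\alpha(x)$ by $h_\alpha(\xo)$ (the difference being $\bigO(|x|)$ and hence negligible) gives the single interior integral $\varphi_\alpha(\xo)\int_{B(0,\kappa_\alpha)}\ua^2$ up to lower-order errors, with $\varphi_\alpha$ as in \eqref{defpo}.

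With the identity reduced to ``interior $=$ boundary'', I would substitute the two asymptotics. The closeness hypothesis \eqref{acontrol2} lets me replace $\ua$ by the bubble $B_\alpha$ in the leading interior integral up to a factor $1+\bigO(\eps)$, and the scaling $x=\ma y$ gives $\int_{B(0,\kappa_\alpha)}B_\alpha^2=\ma^2\int_{B(0,\kappa_\alpha/\ma)}U^2$. Since $U^2\sim\bns^{-2(n-2)/(2-s)}|y|^{-2(n-2)}$ at infinity, this integral converges to $\int_{\rn}U^2$ when $n\ge5$ and diverges logarithmically like $\omega_3\bn^{-4/(2-s)}\ln(\kappa_\alpha/\ma)$ when $n=4$, which is the origin of the dichotomy in \eqref{blow rates}. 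On the boundary, the pointwise and gradient bounds \eqref{acontrol1} give $B_\alpha(\kappa_\alpha)\sim\ma^{(n-2)/2}\kappa_\alpha^{-(n-2)}$, whence the total boundary flux is of size $\bigO\big((\ma/\kappa_\alpha)^{n-2}\big)$. Equating the two sides and dividing by $\ma^2$ yields the stated relations: $\kappa_\alpha^2\ln(\kappa_\alpha/\ma)\varphi_\alpha(\xo)\omega_3\bn^{-4/(2-s)}=\bigO(1)$ for $n=4$, and $\varphi_\alpha(\xo)\int_{\rn}U^2=\bigO(\ma^{n-4}/\kappa_\alpha^{n-2})$ for $n\ge5$.

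The main obstacle is the geometric term. I must track the normal-coordinate expansion of $\de-\dg$ precisely enough to isolate the exact constant $\mathfrak{c}_{n,s}$ from the angular averages of the curvature tensor, and then to certify that every remainder---the higher-order metric terms, the first-order Christoffel contribution, the $x\cdot\nabla h_\alpha$ term, and the error from replacing $\ua$ by $B_\alpha$ via \eqref{acontrol2}---is genuinely of lower order than the leading interior size $\ma^2\times(\text{log or constant})$ and than the boundary size $(\ma/\kappa_\alpha)^{n-2}$. The delicate point is that the interior geometric integral scales exactly like $\int B_\alpha^2$, the same order as the balance itself, so its coefficient must be computed exactly rather than merely bounded; this is precisely the calculation that singles out $\mathfrak{c}_{n,s}=\frac{(n-2)(6-s)}{12(2n-2-s)}$.
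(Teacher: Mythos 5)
Your proposal is correct and follows essentially the same route as the paper's proof: apply a Pohozaev identity in the exponential chart on $B(0,\kappa_\alpha)$, bound the boundary flux by $\bigO\big((\mu_\alpha/\kappa_\alpha)^{n-2}\big)$ using \eqref{acontrol1}, and compute the interior $h_\alpha$-term and the operator-discrepancy term $\int \big(X^l\partial_l \widehat{u}_\alpha+\tfrac{n-2}{2}\widehat{u}_\alpha\big)\big(\Delta_{\widehat{g}}-\Delta_\xi\big)\widehat{u}_\alpha$ to leading order via \eqref{acontrol2} and normal-coordinate curvature expansions, which combine into $\varphi_\alpha(\xo)$ times $\int B_\alpha^2$ and yield exactly the logarithmic ($n=4$) versus convergent ($n\geq 5$) dichotomy; the exact-constant computation isolating $\mathfrak{c}_{n,s}$, which you rightly flag as the delicate point, is precisely what the paper delegates to Step~4.11 and Lemma~4.3 of \cite{HCA4}. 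The only step you leave implicit is the justification of the identity on a ball containing the singular point, where $u_\alpha$ is merely H\"older continuous; the paper handles this by noting that $|X|\,|\nabla \widehat{u}_\alpha(X)|\to 0$ and $|X|^2|\nabla^2 \widehat{u}_\alpha(X)|\to 0$ as $|X|\to 0$, so the limiting excision argument goes through.
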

\begin{proof}
We follow the approach of Cheikh-Ali \cite{HCA4} and use a Pohozaev identity to obtain the above blow-up rates \eqref{blow rates}. Recall the celebrated Riemannian version of the Pohozaev identity. Let $\Omega \subset \R^n$  be smooth bounded domain, and let  $u\in C^2(\overline{\Omega})$ with $u>0$. Then for all $Z\in \R^n$ and $l\in \{1,..., n\}$, the Pohozaev identity (see Hebey \cite{Hbooks}, Ghoussoub-Robert \cite{GRHSinterior}) can be written as:
\begin{align}\label{PoIden}
	&\int_{\Omega} \left( \left( X-Z\right)^{l} \partial_{l}u+\frac{n-2}{2}u \right)\left(\Delta_{\xi} u-\frac{u^{ \crits-1}}{|X|^s}\right) \, dX\nonumber\\
	&=\int_{\partial \Omega} \left( X-Z, \nu\right) \left( \frac{|\nabla u |^2}{2} -\frac{1}{\crits}\frac{u^{ \crits}}{|X|^s}\right)\, d\sigma(X)\\
	&- \int_{\partial \Omega}\left(\left( X-Z\right) ^l \partial_l u+\frac{n-2}{2}u \right) \partial_{\nu}u \, d\sigma(X),\nonumber
\end{align}
where $\nu(X)$ is the outer unit normal to the boundary of $\Omega$. Let 
\begin{equation}\label{def:uha}
	\widehat{u}_{\alpha}(X):= u_{\alpha}(\exp_{\xo}(X))\bb{ for all } X\in B(0,4\kappa_\alpha)\subset \R^n. 
\end{equation}
Equation \eqref{eqn:HSua} becomes:
\begin{equation}\label{eq:uha0}
\Delta_{\widehat{g}}\,\uha + \widehat{h}_{\alpha}   \uha=\frac{ \uha^{\, \crits-1}}{|X|^s} \bb{ in }  B(0,4\kappa_\alpha)\setminus\{0\},
\end{equation}
where $\ha(X)= h_{\alpha}(\exp_{\xo}( X))$ and $\widehat{g}(X):=( \exp^*_{\xo} g) (X)$ is the pull back of $g$ via the exponential map. 
Applying \eqref{PoIden} we  obtain the following Pohozaev identity
\begin{align}\label{PoIden2}
&\int_{\partial B(0,\kappa_\alpha)} \left( X, \nu\right) \left( \frac{|\nabla \uha |^2}{2} -\frac{1}{\crits}\frac{\uha^{\, \crits}}{|X|^s}\right)\,d\sigma(X)\notag\\
&~- \int_{\partial B(0,\kappa_\alpha)}\left(X^l \partial_l \uha+\frac{n-2}{2}\uha \right) \partial_{\nu}\uha \, d\sigma(X)\notag\\
=&-\int_{B(0,\kappa_\alpha)} \left( X^l \partial_l \uha+\frac{n-2}{2}\uha\right)\ha\uha\, dX\notag\\
&~-\int_{B(0,\kappa_\alpha)} \left( X^l \partial_l \uha+\frac{n-2}{2}\uha \right)\left(\Delta_{\widehat{g}}\,\uha -\Delta_{\xi}\uha \right)\, dX.
\end{align}
Note, the integrals above makes sense, since one has
\begin{equation*}
	\lim\limits_{|X|\to0}|X||\nabla \uha(X)|=0 \bb{ and }\lim\limits_{|X|\to0}|X|^{2}|\nabla^{2} \uha(X)|=0,
\end{equation*}
for all $\alpha\ge1$, and by using the Cartan's expansion of the metric one obtains for some $C>0$, that
$$\left|\Delta_{\widehat{g}}\,\uha -\Delta_{\xi}\uha \right|\leq C\(|X||\nabla \uha(X)|+|X|^{2}|\nabla^{2} \uha(X)|\).$$ 
\noindent
Using the assumed bound \eqref{acontrol1}, we get as $\alpha\to +\infty$ that 
\begin{align}\label{ralpha}
&\int_{\partial B(0,\kappa_\alpha)} \left( X, \nu\right) \left( \frac{|\nabla \uha |^2}{2} -\frac{1}{\crits}\frac{\uha^{\, \crits}}{|X|^s}\right)\,d\sigma(X)\notag\\
&~- \int_{\partial B(0,\kappa_\alpha)}\left(X^l \partial_l \uha+\frac{n-2}{2}\uha \right) \partial_{\nu}\uha \, d\sigma(X)=\bigO\(\frac{\mu_{\alpha}^{n-2}}{\kappa_{\alpha}^{n-2}}\).
\end{align}
By \eqref{acontrol2} and since $h_\alpha\to h_{\infty}$ in $C^1(M)$ as $\alpha\to +\infty$, following the calculations in Cheikh-Ali \cite{HCA4} we obtain as $\alpha\to +\infty$
\begin{align}\label{calpha}
&\int_{B(0,\kappa_\alpha)} \left( X^l \partial_l \uha+\frac{n-2}{2}\uha\right)\ha\uha\, dX\notag\\
&~=\left\{\begin{array}{ll}  
\bigO\left(\kappa_{\alpha}\ma\right)  &\bb{ for } n=3\\\\
-\ma^2\ln\left(\dfrac{\kappa_{\alpha}}{\mu_{\alpha}}\right)\left(  \omega_3 \bns^{-\frac{4}{2-s}}h_{\infty}(\xo)+\smallo(1)\right) &\hbox{ for }n=4 \\\\
-\ma^2 \( h_{\infty}(\xo)\displaystyle\int_{\R^n} U^2 \, dX+\smallo(1)\)  &\hbox{ for } n\geq 5.
\end{array}\right.
\end{align}
From \eqref{strongconvua}, for any $R>0$ we have  $\tua \to U\bb{ in } H_1^2(B(0,R)) \bb{ as } \alpha\to +\infty$. Using \eqref{acontrol1} and following Step~4.11 in Cheikh-Ali~\cite{HCA4}, we get as $\alpha\to+\infty$
\begin{align}\label{dalpha}
&\int_{B(0,\kappa_\alpha)} \left( X^l \partial_l \uha+\frac{n-2}{2}\uha \right)  \left(\Delta_{\widehat{g}}\,\uha -\Delta_{\xi}\uha  \right) \, dX\notag\\
&=\left\{\begin{array}{ll}  
\bigO\left(\kappa_{\alpha}\ma\right)  &\bb{ for } n=3\\\\
\ma^2\ln\left(\dfrac{\kappa_{\alpha}}{\mu_{\alpha}}\right)\left(\dfrac{\omega_3}{6}\bn^{-\frac{4}{2-s}}\sg(\xo)+\smallo(1)\right)&\hbox{ for }n=4\\\\
\ma^2\(\mathfrak{c}_{n,s}\sg(\xo) \displaystyle\int_{\R^n} U^2 \, dX+\smallo(1)\)  &\hbox{ for } n\geq 5.
\end{array}\right.
\end{align}
Note, one needs to make use the following estimate (Lemma~4.3 in Cheikh-Ali~\cite{HCA4}) in dimension $n=4$. Consider $\ut$ defined in \eqref{defuta}.  For $1\le i_{1}, i_{2},j_{1},j_{2}\le n=4$, we have
\begin{equation}\label{key4}
 \lim\limits_{\alpha\to +\infty}\frac{\displaystyle\int_{B(0,r_{\alpha}/\mu_{\alpha})} X^{i_{1}}X^{i_{2}} \partial_{j_{1}} \ut \partial_{j_{2}} \ut \,dX}{\ln\(\frac{\kappa_{\alpha}}{\mu_{\alpha}}\)}= 4\bn^{-\frac{4}{2-s}}\int_{\mathbb{S}^{3}}\sigma^{i_{1}} \sigma^{i_{2}} \sigma^{j_{1}}\sigma^{j_{2}} \, d\sigma,
 \end{equation}
where $\bn$ is  defined in \eqref{def:B0}. Plugging \eqref{ralpha}, \eqref{calpha} and \eqref{dalpha} in the Pohozaev identity \eqref{PoIden2} gives us the claimed estimates in equation \eqref{blow rates}.  The proof of the lemma is complete.
\end{proof}
Following is the core technical result of this paper. The ideas and the techniques are motivated from Druet and Premoselli \cite{DruetPremoselli},
Premoselli \cite{Bruno} and Premoselli-V\'etois \cite{PV}. Also see Druet and Laurain \cite{DruetLaurain} for the case $s=0$, and  Cheikh-Ali \cite{HCA4} for the case when the blowing up sequence is close to one bubble in the energy.
\begin{prop}\label{P3}
Let $(M,g)$ be a closed and smooth Riemannian manifold of dimension $n\geq 3$, fix $\xo \in M$ and $s\in (0,2)$.  Let $(h_{\alpha})_{\alpha\in \N}$ be as in the statement of Proposition \ref{main:prop} and consider $(u_{\alpha})_{\alpha\in \N}$ satisfying  equation \eqref{eqn:HSua} such that $\lim \limits_{\alpha \to+\infty} \Vert u_{\alpha}\Vert_{L^\infty(M)} =+\infty$. Then for any $\varepsilon\in(0,1)$ there exists $\rho_{\eps}>0$ small such that we have
\begin{align}\label{es:main}
\left\|\frac{u_{\alpha}}{B_{\alpha}}-1\right\|_{L^{\infty}(B_g(\xo,\rho_{\eps}))}\le\varepsilon.
\end{align}
Here $B_{\alpha}$ is the rescaled bubble defined in \eqref{def:bubble}. 
\end{prop}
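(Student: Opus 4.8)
The plan is to argue by contradiction, controlling the maximal radius on which the $\varepsilon$-estimate holds. Fix $\varepsilon\in(0,1)$, write $v_{\alpha}:=u_{\alpha}/B_{\alpha}$, and set
\begin{equation*}
r_{\alpha}:=\sup\left\{r\in(0,\ig/4]\ :\ |v_{\alpha}-1|\le\varepsilon \text{ on } B_g(\xo,r)\right\}.
\end{equation*}
By Lemma \ref{P2} and the remark that $u_{\alpha}=(1+\smallo(1))B_{\alpha}$ in $C^{0}(B_g(\xo,R\ma))$, one has $r_{\alpha}/\ma\to+\infty$. The first ingredient I would establish is the \emph{weak pointwise bound} $u_{\alpha}(x)\le C\,B_{\alpha}(x)$ on a fixed ball $B_g(\xo,\rho_{0})$: this follows from the Green's representation $u_{\alpha}(x)=\int_{M}G_{\alpha}(x,y)\,d_g(\xo,y)^{-s}u_{\alpha}(y)^{\crits-1}\,dv_g(y)$, the a priori bound of Lemma \ref{P1}, and the estimate $G_{\alpha}(x,y)\le C\,d_g(x,y)^{2-n}$ granted by coercivity. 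This weak bound underlies the hypotheses \eqref{acontrol1} of Lemma \ref{lem:poho} (the gradient control following from it by elliptic estimates), and establishing it is the main obstacle. Granting it, suppose for contradiction that $r_{\alpha}\to0$ along a subsequence; since the $\varepsilon$-control just fails beyond $r_{\alpha}$, continuity produces a touching point $x_{\alpha}$ with $d_g(\xo,x_{\alpha})=r_{\alpha}$ and $|v_{\alpha}(x_{\alpha})-1|=\varepsilon$.

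Next I would blow up at the intermediate scale $r_{\alpha}$, normalising by the far field of the bubble. Set
\begin{equation*}
w_{\alpha}(X):=\frac{r_{\alpha}^{\,n-2}}{\ma^{\frac{n-2}{2}}}\,u_{\alpha}(\eo(r_{\alpha}X)),\qquad X\in B(0,\ig/2r_{\alpha})\setminus\{0\},
\end{equation*}
with the correspondingly rescaled metric. A direct computation from \eqref{eqn:HSua} shows that the nonlinear term and the potential term pick up the prefactors $(\ma/r_{\alpha})^{2-s}\to0$ and $r_{\alpha}^{2}\to0$, so that on every compact subset of $\R^{n}\setminus\{0\}$ the rescaled equation degenerates to $\Delta_{\xi}w_{\alpha}=\smallo(1)$. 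The weak bound makes $w_{\alpha}$ locally bounded, so by elliptic theory $w_{\alpha}\to w$ in $C^{2}_{\loc}(\R^{n}\setminus\{0\})$, with $w\ge0$ harmonic on $\R^{n}\setminus\{0\}$. By B\^ocher's theorem together with Liouville's theorem (the weak bound forces the regular part to vanish as $|X|\to+\infty$), one gets $w(X)=\lambda|X|^{2-n}$ for some $\lambda\ge0$.

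The final and decisive step is to identify $\lambda$ by a flux computation, which is what rigidifies the limit. Up to the dimensional factor $(n-2)\omega_{n-1}$ (where $\omega_{n-1}$ is the area of the unit sphere in $\R^{n}$), $\lambda$ equals the flux of $w$ across a small sphere, which by the divergence theorem and the equation equals $\lim_{\alpha}\int_{B(0,\delta)}(\ma/r_{\alpha})^{2-s}\,|X|^{-s}w_{\alpha}^{\crits-1}\,dX$. Rescaling the integration variable to the concentration scale $\ma$ and using the strong convergence $\tua\to U$ from Lemma \ref{P2}, this integral converges to $\int_{\R^{n}}|Y|^{-s}U^{\crits-1}\,dY=\int_{\R^{n}}\Delta_{\xi}U\,dY$, the flux of $U$ at infinity. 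Since $U(Y)\sim\bns^{-\frac{n-2}{2-s}}|Y|^{2-n}$, this forces $\lambda=\bns^{-\frac{n-2}{2-s}}$, which is exactly the far-field coefficient of $B_{\alpha}$; hence $w$ is the rescaled limit of $B_{\alpha}$ and $v_{\alpha}(x_{\alpha})\to1$, contradicting $|v_{\alpha}(x_{\alpha})-1|=\varepsilon$. Therefore $\liminf_{\alpha}r_{\alpha}>0$, and any $\rho_{\varepsilon}\in(0,\liminf_{\alpha}r_{\alpha})$ satisfies \eqref{es:main}.
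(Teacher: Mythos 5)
Your skeleton---defining the maximal radius of $\varepsilon$-control, blowing up at the intermediate scale $r_\alpha$, identifying the singular coefficient by a flux computation, and contradicting the existence of a touching point---is the paper's skeleton, and your flux computation correctly yields the coefficient $\mathfrak{b}_{n,s}^{-\frac{n-2}{2-s}}$ exactly as in the paper's first claim. But there is a genuine gap at the step you yourself flag as the main obstacle: the bound $u_{\alpha}\le C\,B_{\alpha}$ on a \emph{fixed} ball does not follow from one application of the Green representation together with Lemma \ref{P1}. Feeding $u_{\alpha}(y)\le C\,d_g(\xo,y)^{-\frac{n-2}{2}}$ into the representation formula with $G_{\alpha}(x,y)\le C\,d_g(x,y)^{2-n}$ produces the integrand $d_g(x,y)^{2-n}\,d_g(\xo,y)^{-\frac{n+2}{2}}$, and the standard Giraud-type convolution estimate returns precisely $u_{\alpha}(x)\le C\,d_g(\xo,x)^{-\frac{n-2}{2}}$, i.e.\ Lemma \ref{P1} again: there is no mechanism producing the factor $\mu_{\alpha}^{\frac{n-2}{2}}\to 0$ that separates $B_{\alpha}(x)\approx\mu_{\alpha}^{\frac{n-2}{2}}\mathfrak{b}_{n,s}^{-\frac{n-2}{2-s}}d_g(\xo,x)^{2-n}$ from that much weaker bound. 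Moreover the logical order is reversed: the fixed-ball (indeed global) estimate $u_{\alpha}\le C\,B_{\alpha}$ is Corollary \ref{co2}, a \emph{consequence} of Proposition \ref{P3}, not an available input. The paper only ever needs the bound at scale $4r_{\alpha}$, where it is essentially free: inside $B_g(\xo,r_{\alpha})$ it holds by the very definition \eqref{def:r} of $r_{\alpha}$, and it extends to the annulus up to $4r_{\alpha}$ by Harnack (Step \ref{s1}).

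This gap propagates to your treatment of the harmonic part. Because you assume control out to a fixed radius $\rho_{0}$, after rescaling you get $w_{\alpha}(X)\le C|X|^{2-n}$ on balls of radius $\rho_{0}/r_{\alpha}\to+\infty$, and Liouville then kills the regular part of the B\^ocher decomposition for free---which is why your argument never invokes the hypothesis $h_{\infty}(\xo)\le\mathfrak{c}_{n,s}\sg(\xo)$ for $n\ge4$ inherited from Proposition \ref{main:prop}, a clear warning sign. In the paper the limit exists only on $B(0,2)\setminus\{0\}$, so $w_{\infty}=\mathscr{A}|X|^{2-n}+\mathscr{H}$ with $\mathscr{H}$ a priori nontrivial, and eliminating $\mathscr{H}$ is exactly where the work lies: $\mathscr{H}\ge0$ is proved via the Green representation formula and the expansion of the Green's function (Claim \ref{step:signg}), while $\mathscr{H}(0)\le0$ comes from the Pohozaev identity at scale $r_{\alpha}$ and uses the sign $\varphi_{\infty}(\xo)\le0$ (Claim \ref{s4}); the maximum principle then forces $\mathscr{H}\equiv0$. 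Without the sign hypothesis nothing rules out a nonzero regular part---heuristically, loss of single-bubble control at intermediate scales, as in tower-type configurations, which that hypothesis is there to exclude---so the proposal as written cannot close; it would need to be repaired by replacing the unproven fixed-ball bound with the scale-$r_{\alpha}$ bound and supplying the two sign claims via the Green's function and the Pohozaev balancing.
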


\begin{proof}
\noindent
We define the \emph{radius of influence} $r_\alpha>0$ of the standard bubble with $\varepsilon\in (0,1)$ error as:
\begin{align}\label{def:r}
r_{\alpha}:=&\sup\big\{ \mu_{\alpha}\leq r\leq \iota_{g}/6: |u_{\alpha}(x)-B_{\alpha}(x)|\leq\varepsilon B_{\alpha}(x)~\forall\,x\in B_g(\xo,r)\big\}.
\end{align}
From \eqref{strongconvua} it follows that
\begin{equation}\label{limrama}
	\lim_{\alpha\to +\infty} \dfrac{r_{\alpha}}{\mu_{\alpha}}=+\infty.
\end{equation}
We start with establishing the following control. 
\begin{step}\label{s1} There exists $C>0$ such that for all $\alpha$ large one has 
\begin{align}\label{main:ineq1}
u_{\alpha}(x)&\le C B_{\alpha}(x)~\hbox{ for all }x\in B_g(\xo,4r_{\alpha}),
\end{align}
and for all $R>0$ there exists $C_{R}>0$ such that for all $\alpha\ge1$ large
\begin{align}\label{main:ineq2}
r_{\alpha}|\nabla u_{\alpha}(x)|&\le C_{R} B_{\alpha}(x)~\hbox{ for all }x\in B_g(\xo,4r_{\alpha})\setminus B_g(\xo,r_{\alpha}/R).
\end{align}
\end{step}

\noindent
{\emph{Proof of Step \ref{s1}}:} 
Note from the definition of $r_{\alpha}$ it follows that 
\begin{equation}\label{eq4}
	u_{\alpha}(x)\le C_1\,B_{\alpha}(x) \bb{ for all } x\in B_g(0,r_\alpha),
\end{equation}
for some $C_1>0$ independent of $\alpha$. Fix $R>1$. Applying the  Harnack's inequality on $B_g(\xo,4r_{\alpha})\setminus B_g(\xo,r_{\alpha}/R)$ as in lemma 6.2 of Hebey \cite{Hbooks} (with minor modifications), we obtain for all $x\in B_g(\xo,4r_{\alpha})\setminus B_g(\xo,r_{\alpha}/R) $
\begin{align*}
&u_{\alpha}(x)\leq\max\limits_{B(0,4)\setminus B(0,1/R)} u_{\alpha}(\exp_{\xo}(r_{\alpha}\,\cdot))\\
&~\le C_2\,r_{\alpha}^{\frac{2-n}{2}}\,\min\limits_{B(0,4)\setminus B(0,1/R)} r_{\alpha}^{\frac{n-2}{2}}u_{\alpha}(\exp_{\xo}(r_{\alpha}\,\cdot))\notag\\
&~\le C_3\,r_{\alpha}^{\frac{2-n}{2}}\, B_{\alpha}(x),
\end{align*}
for some positive constants  $C_2$ and $C_3$.  Together with \eqref{eq4} this gives \eqref{main:ineq1}. The gradient bound \eqref{main:ineq2} follows from standard elliptic estimates.
\hfill\qed
\smallskip

We now prove convergence to the fundamental solution on $\R^{n}$.
\begin{step}\label{s3} Assume $r_{\alpha}\to0$ as $\alpha\to+\infty$.  We rescale and define
\begin{align*}
w_{\alpha}(X):=\frac{r_{\alpha}^{n-2}}{\mu_{\alpha}^{\frac{n-2}{2}}}u_{\alpha}(\exp_{x_{0}}(r_{\alpha}X))~\hbox{ for } X\in B(0,2).
\end{align*}
Then $w_{\alpha}(X)\to[(n-2)(n-s)]^{\frac{n-2}{2-s}}|X|^{2-n}$ in $C^{2}_{\loc}(B(0,2)\setminus\{0\})$ as $\alpha\to+\infty$.
\end{step}
\noindent
{\emph{Proof of Step \ref{s3}} :} Let $g_{\alpha}:=\exp_{x_{0}}^{*} g(r_{\alpha}\cdot)$ for $\alpha\ge1$. It follows, using \eqref{eqn:HSua}, that $w_{\alpha}$ satisfies: 
\begin{equation}\label{eq:uha1}
\Delta_{g_{\alpha}}w_{\alpha}+r_{\alpha}^{2}h_{\alpha}(\exp_{x_{0}}(r_{\alpha}X))\,w_{\alpha}=\(\frac{\mu_{\alpha}}{r_{\alpha}}\)^{2-s}\,\frac{w_{\alpha}^{\,2^{*}(s)-1}}{|X|^{s}} ~\hbox{ in } B(0,2)\setminus\{0\},
\end{equation}
and from Step \ref{s1} we have for some constant $C>0$
\begin{equation}\label{cont:uha}
	w_{\alpha}(X)\leq C\,|X|^{2-n} \bb{ in }B(0,2)\setminus \{0\}.
\end{equation}
Standard elliptic estimates implies then  $w_{\alpha}\to w_{\infty}$ in $C^{1}_{\loc}(B(0,2)\setminus\{0\})$ as $\alpha\to+\infty$, where
\begin{equation*}
		\Delta_{\xi}w_{\infty}=0 \bb{ and }	w_{\infty}(X)\leq C\,|X|^{2-n} \bb{ in } B(0,2)\setminus\{0\},
\end{equation*}
for some constant $C>0$.
By the B\^{o}cher theorem \cite{B,ABR} on the singularities of harmonic functions we can write as $ \alpha\to +\infty$,
\begin{align}\label{sing+har}
w_{\alpha}\to w_{\infty}=\mathscr{A} |X|^{2-n}+\mathscr{H}(X)~\hbox{ in } C^{1}_{\loc}(B(0,2)\setminus\{0\}),
\end{align}
where $\mathscr{A}$ is a constant and $\Delta_\xi \mathscr{H}=0$ in $B(0,2)$. 
\begin{claim}
We claim that in \eqref{sing+har}
\begin{equation}\label{constantLambda}
\mathscr{A}=\mathfrak{b}_{n,s}^{-\frac{n-2}{2-s}}=\[(n-2)(n-s)\]^{\frac{n-2}{2-s}}.
\end{equation}
\end{claim}
\noindent{\it Proof of the claim}: For any $\delta>0$ small, integrating equation \eqref{eq:uha1} in $B(0,1)\setminus B(0,r_{\alpha}^{2})$ we obtain
\begin{eqnarray}\label{po1}
&&\int_{B(0,1)\setminus B(0,\delta\mu_{\alpha}/r_{\alpha})}\Delta_{g_{\alpha}}w_{\alpha}\, dv_{g_{\alpha}}= \left( \frac{\ma}{\ra}\right)^{2-s}\int_{B(0,1)\setminus B(0,\delta\mu_{\alpha}/r_{\alpha})}\frac{w_{\alpha}^{\, \crits-1}}{|X|^s}\, dv_{g_{\alpha}}\notag\\
&&\quad-\,\ra^2\int_{B(0,1)\setminus B(0,\delta\mu_{\alpha}/r_{\alpha})}h_{\alpha}(\exp_{x_{0}}(r_{\alpha}X))\,w_{\alpha} \, dv_{g_{\alpha}}.
\end{eqnarray}
Using \eqref{cont:uha} 
\begin{equation*}
\int_{B(0,1)\setminus B(0,\delta{\mu_{\alpha}}/{r_{\alpha}})} h_{\alpha}(\exp_{x_{0}}(r_{\alpha}X))w_{\alpha}\, dv_{g_{\alpha}}=\bigO\left( \int_{B(0,1)}  |X|^{2-n} \, dX\right)=\bigO(1).
\end{equation*}
And by a change of variable, we obtain using \eqref{strongconvua} 
\begin{align*}
&\lim_{\alpha\to +\infty} \left( \frac{\ma}{\ra}\right)^{2-s}\int_{B(0,1)\setminus B(0,\delta\mu_{\alpha}/r_{\alpha})} \frac{w_{\alpha}^{\, \crits-1}}{|X|^s}\, dv_{g_{\alpha}}\notag\\
&~=\lim_{\alpha\to +\infty}\int_{B\left( 0,{\ra}/{\ma}\right)\setminus B(0,\delta)} \frac{\tua^{\,\crits-1}}{|X|^s}\, dv_{g_{\alpha}}=\int_{\R^{n}\setminus B(0,\delta)}\frac{U^{\crits-1}}{|X|^s}\, dX.
\end{align*}
Here $U$ is defined in \eqref{def:B0}. Next, we write 
\begin{align*}
\int_{B(0,1)\setminus B(0,\delta\mu_{\alpha}/r_{\alpha})}\Delta_{g_{\alpha}}w_{\alpha}\, dv_{g_{\alpha}}=-\int_{\partial B(0,1)} \pr_{\nu}w_{\alpha}\,
d\sigma_{g_{\alpha}}+\int_{\partial B(0,\delta\mu_{\alpha}/r_{\alpha})} \pr_{\nu}w_{\alpha}\, d\sigma_{g_{\alpha}}.
\end{align*}
Here $\nu$ denotes the outer unit normal derivative. Again, by a change of variable, we obtain using \eqref{strongconvua} 
\begin{align*}
\la\int_{\partial B(0,\delta\mu_{\alpha}/r_{\alpha})} \pr_{\nu}w_{\alpha}\, d\sigma_{g_{\alpha}}=\int_{\partial B(0,\delta)} \pr_{\nu}U\, d\sigma
\end{align*}
and \eqref{sing+har} implies 
\begin{equation*}
\la \int_{\partial B(0,1)} \pr_{\nu}w_{\alpha}\, d\sigma_{g_{\alpha}}=(2-n)w_{n-1}\mathscr{A}.
\end{equation*}
Hence, by letting $\alpha\to+\infty$ in  \eqref{po1}, we get for any $\delta>0$ small
\begin{align*}
(n-2)w_{n-1}\mathscr{A}+\int_{\partial B(0,\delta)} \pr_{\nu}U\, d\sigma=\int_{\R^{n}\setminus B(0,\delta)}\frac{U^{\crits-1}}{|X|^s}\, dX.
\end{align*}
Using the expression of $U$ given in \eqref{def:B0} and letting $\delta\to0$ gives us the claim, since
\begin{align*}
(n-2)w_{n-1}\mathscr{A}=\int_{\R^{n}}\frac{U^{\crits-1}}{|X|^s}\, dX
=(n-2)w_{n-1}\mathfrak{b}_{n,s}^{-\frac{n-2}{2-s}}.
\end{align*}
See for instance equation $(43)$ of \cite{HCA4}.
\hfill\qed \par
\medskip

Next, we obtain a sign on the harmonic part of $w_{\infty}$.
\begin{claim}\label{step:signg}
The harmonic function $\mathscr{H}$ in \eqref{sing+har} satisfies $\mathscr{H}(X)\geq0$ for all $X\in B(0,2)$.
\end{claim}
\noindent{\it Proof of the claim}:  From the coercivity of $\Delta_g+h_{\infty}$ it follows that for $\alpha\in \N$ is large enough, the operator $\Delta_{g}+h_{\alpha}$ is also coercive. Letting $G_{\alpha}>0$ denote the Green's function of  $\Delta_{g}+h_{\alpha}$. By \eqref{eqn:HSua} and Green's representation formula, we can write for all $x\in M$
\begin{align*}
u_{\alpha}(x)=\int_{M}G_{\alpha}(x,z)\frac{u_{\alpha}^{\crits-1}(z)}{d_{g}(\xo,z)^{s}}~dv_{g}(z),
\end{align*}
Then for any $X\in B(0,2)\setminus\{0\}$ we have
\begin{align*}
&\mathfrak{b}_{n,s}^{\frac{n-2}{2-s}}|X|^{n-2}\,w_{\alpha}(X)\ge~\mathfrak{b}_{n,s}^{\frac{n-2}{2-s}} |X|^{n-2}\\
&~\times\frac{r_{\alpha}^{n-2}}{\mu_{\alpha}^{\frac{n-2}{2}}}\int_{B(x_{0},\iota/2)}G_{\alpha}\(\exp_{x_{0}}(r_{\alpha}X),z\)\frac{u_{\alpha}^{2^{*}(s)-1}(z)}{d_{g}(x_{0},z)^{s}}~dv_{g}(z)\notag\\
&\ge\,\frac{\mathfrak{b}_{n,s}^{\frac{n-2}{2-s}}}{(n-2)\omega_{n-1}}\((n-2)\omega_{n-1}|r_{\alpha}X|^{n-2}G_{\alpha}(x_{0},\exp_{x_{0}}(r_{\alpha}X))\)\notag\\
&~\times\int_{B(0,\iota/2\mu_{\alpha})}\frac{G_{\alpha}\(\exp_{x_{0}}(r_{\alpha}X\),\exp_{x_{0}}(\mu_{\alpha}Z))}{G_{\alpha}(\exp_{x_{0}}(r_{\alpha}X),x_{0})}\,\frac{\widetilde{u}_{\alpha}^{2^{*}(s)-1}}{|Z|^{s}}~dv_{g}(\exp_{\xo}(\mu_{\alpha}Z)),
\end{align*}
where $\tua$ is defined as in \eqref{defuta} and  $\omega_{n-1}$ is the area of the unit sphere in $\R^n$.
Using the expansion of the Green's function (see appendix A of \cite{DHR} and Robert \cite{Rgreen}),  Lemma \ref{P2}  and Fatou’s lemma we obtain
\begin{align*}
&\int_{B(0,\iota/2\mu_{\alpha})}\frac{G_{\alpha}\(\exp_{x_{0}}(r_{\alpha}X\),\exp_{x_{0}}(\mu_{\alpha}Z))}{G_{\alpha}(\exp_{x_{0}}(r_{\alpha}X),x_{0})}\,\frac{\widetilde{u}_{\alpha}^{2^{*}(s)-1}}{|Z|^{s}}~dv_{g}(\exp_{\xo}(\mu_{\alpha}Z)).\\
&\hspace{1.5cm} \longrightarrow\bns^{\frac{2-n}{2-s}}(n-2)\omega_{n-1}\hbox{ as }\alpha\to+\infty,\\
&\hbox{ and } |r_{\alpha}X|^{n-2}G_{\alpha}(x_{0},\exp_{x_{0}}(r_{\alpha}X))\to \frac{1}{(n-2)\omega_{n-1}} \bb{ as } \alpha\to +\infty.
\end{align*}
Then as $\alpha\to+\infty$ one has
\begin{align*}
\mathfrak{b}_{n,s}^{\frac{n-2}{2-s}}|X|^{n-2}\,w_{\alpha}(X)\geq 1+\smallo(1),
\end{align*}
where $\smallo(1)\to 0$ as $\alpha\to +\infty$.
Thus the convergence in \eqref{sing+har} and equation \eqref{constantLambda} implies our claim that $\mathscr{H}(X)\geq0$ for all $X\in B(0,2)$.
\hfill\qed
\begin{claim}\label{s4}
The harmonic function $\mathscr{H}$ in \eqref{sing+har} satisfies $\mathscr{H}(0)\leq0$ in $B(0,2)$ for $n\geq4$ and $\mathscr{H}(0)=0$ for $n=3$.
\end{claim}
\noindent{\it Proof of the claim}:  Recall that, letting $\widehat{u}_{\alpha}(X) := u_{\alpha}(\exp_{\xo}(X))$,  $\ha(X)= h_{\alpha}(\exp_{\xo}( X))$ and $\widehat{g}(X):=( \exp^*_{\xo} g) (X)$, we have obtained the following Pohozaev's identity in \eqref{PoIden2}
\begin{eqnarray}\label{PoIden4}
\mathcal{R}_{\alpha}	&=&-\int_{B(0, r_\alpha)} \left( X^l \partial_l \uha+\frac{n-2}{2}\uha\right)\ha\uha\, dX\\
&&~-\int_{B(0, r_\alpha)} \left( X^l \partial_l \uha+\frac{n-2}{2}\uha \right)\left(\Delta_{\widehat{g}}\,\uha -\Delta_{\xi}\uha \right)\, dX,\notag
\end{eqnarray}
where $\mathcal{R}_{\alpha}$ is   the boundary term 
\begin{align*}
\mathcal{R}_{\alpha}:=&\int_{\partial B(0,r_\alpha)} \left( X,\nu\right) \left( \frac{|\nabla \uha |^2}{2} -\frac{1}{\crits}\frac{\uha^{\, \crits}}{|X|^s}\right)\,d\sigma(X)\notag\\
&~- \int_{\partial B(0,r_\alpha)}\left(X^l \partial_l \uha+\frac{n-2}{2}\uha \right) \partial_{\nu}\uha \, d\sigma(X).
\end{align*}
 Together with a change of variable and the convergence in \eqref{sing+har}, we have 
\begin{align*}
&\left( \frac{\ra}{\ma}\right)^{n-2}\mathcal{R}_{\alpha }=\int_{\partial B(0,1)} \left( X,\nu\right) \left( \frac{|\nabla w_{\alpha}|^2}{2} -\(\frac{\mu_{\alpha}}{r_{\alpha}}\)^{2-s}\frac{1}{\crits}\frac{w_{\alpha}^{\,\crits}}{|X|^s}\right)\, d\sigma(X)\\
&~- \int_{\partial B(0,1)}\left(X^l \partial_l w_{\alpha}+\frac{n-2}{2} w_{\alpha} \right) \partial_{\nu} w_{\alpha} \, d\sigma(X)\\
&=\mathscr{B}_{1}(w_{\infty})+\smallo(1) \bb{ as } \alpha\to +\infty,
\end{align*}
where for $0<\delta\leq1$
\begin{align*}
\mathscr{B}_{\delta}(w_{\infty}):=&\int_{\partial B(0,\delta)} \left( \frac{|\nabla w_{\infty} |^2}{2} -\left(  \partial_{\nu}w_{\infty}\right)^2 \right)\, d\sigma(X)\\
&~- \frac{n-2}{2}\int_{\pr B(0,\delta)} w_{\infty}\partial_{\nu}w_{\infty}\, d\sigma(X).
\end{align*}
Since $w_{\infty}$ is harmonic in $B(0,2)\setminus\{0\}$, using again the Pohozaev identity, it follows that $\mathscr{B}_{\delta}(w_{\infty})$ is independent of $\delta$. Then using the expression of $w_{\infty}$ in \eqref{sing+har} we obtain 
\begin{align*}
\mathscr{B}_{1}(w_{\infty})=\lim\limits_{\delta\to0}\mathscr{B}_{\delta}(w_{\infty})=\frac{(n-2)^2}{2}w_{n-1}\mathfrak{b}_{n,s}^{-\frac{n-2}{2-s}}\mathscr{H}(0).
\end{align*}
Thus 
\begin{equation}\label{Ralpha1}
\mathcal{R}_{\alpha}=\left( \frac{(n-2)^2}{2}\omega_{n-1}b_{n,s}^{-\frac{n-2}{2-s}}\mathscr{H}(0)+\smallo(1)\right)\frac{\ma^{n-2}}{\ra^{n-2}}.
\end{equation}

\noindent
 We have already calculated the other terms in the Pohozaev identity \eqref{PoIden4} in \eqref{calpha} and \eqref{dalpha}, giving us that, as $\alpha\to +\infty$ 
\begin{align}\label{eq3}
&-\int_{B(0, r_\alpha)} \left( X^l \partial_l \uha+\frac{n-2}{2}\uha\right)\ha\uha\, dX\notag\\
&~-\int_{B(0, r_\alpha)} \left( X^l \partial_l \uha+\frac{n-2}{2}\uha \right)  \left(\Delta_{\widehat{g}}\,\uha -\Delta_{\xi}\uha  \right) \, dX\nonumber\\
&=\left\{\begin{array}{ll}  
\bigO\left(r_{\alpha}\ma\right)  &\bb{ for } n=3\\\\
\ma^2\ln\left(\dfrac{r_{\alpha}}{\mu_{\alpha}}\right)\left(\varphi_\infty(x_{0})\,\omega_3\bn^{-\frac{4}{2-s}}+\smallo(1)\right)&\hbox{ for }n=4\\\\
\ma^2\(\varphi_\infty(x_{0}) \displaystyle\int_{\R^n} U^2 \, dX+\smallo(1)\)  &\hbox{ for } n\geq 5,
\end{array}\right.
\end{align}
Here $\varphi_{\infty}(x_{0})$ is defined as in \eqref{defpo}. Going back to \eqref{PoIden4} with \eqref{Ralpha1} and \eqref{eq3}, we get
\begin{align*}
-\frac{1}{2}\omega_3 \bnss^{-\frac{1}{2-s}}\mathscr{H}(0)&= 0   &\hbox{ if } n=3,\\
2\mathscr{H}(0)&= \po(\xo) \lim_{\alpha\to +\infty}\ra^2\ln\left( \frac{\ra}{\ma}\right)  &\hbox{ if } n=4,\\
\frac{(n-2)^2}{2}\omega_{n-1}\bns^{-\frac{n-2}{2-s}}\mathscr{H}(0)&=\left( \po(\xo)\int_{\R^n} U^{2}\, dX \right) \lim_{\alpha\to +\infty}\frac{\ra^{n-2}}{\ma^{n-4}}&\hbox{ if } n\geq 5.
\end{align*}
We get our claim \eqref{s4}, since $\lim\limits_{\alpha\to+\infty}r_{\alpha}=0$ and $\po(\xo)\leq 0$ when $n\geq 4$. \hfill\qed \par
\noindent
Step \eqref{s3} then follows from the maximum principle since $\mathscr{H}$ is harmonic.\hfill\qed 
\begin{step}\label{s5}
We have  $$\liminf\limits_{\alpha\to+\infty}r_{\alpha}>0.$$
\end{step}
\noindent{\emph{Proof of Step \ref{s4}} : }If $\lim\limits_{\alpha\to+\infty}r_{\alpha}=0$ up to a subsequence, there exists $(y_{\alpha})_{\alpha\in \N}$ in $M$ be such that $d_{g}(x_{0},y_{\alpha})=r_{\alpha}=\smallo(1)$ as $\alpha\to+\infty$ and
\begin{equation*}
|u_{\alpha}(y_{\alpha})-B_{\alpha}(y_{\alpha})|=\varepsilon B_{\alpha}(y_{\alpha}).
\end{equation*}
From claim \eqref{step:signg} we have  $u_{\alpha}(y_{\alpha})=(1+\varepsilon)B_{\alpha}(y_{\alpha})$. Let now $\Theta_{\alpha}:=r_{\alpha}^{-1}\exp_{x_{0}}^{-1}y_{\alpha}$. Then $|\Theta_{\alpha}|=1$ and 
$$w_{\alpha}(\Theta_{\alpha})=(1+\varepsilon)\(\(\dfrac{\mu_{\alpha}}{r_{\alpha}}\)^{2-s}+\mathfrak{b}_{n,s}\)^{\frac{2-n}{2-s}}.$$ 
Passing to the limit we obtain using step \eqref{s3}, that : $$[(n-s)(n-2)]^{\frac{n-2}{2-s}}=(1+\varepsilon)[(n-s)(n-2)]^{\frac{n-2}{2-s}},$$ a contradiction. This ends Step~\ref{s4}.\hfil\qed 
\smallskip

Finally, it is sufficient to take $\rho_\varepsilon := \liminf\limits_{\alpha \to +\infty} r_\alpha > 0$, which implies the claimed estimate~\eqref{es:main}. This ends the proof of Proposition~\ref{P3}.
\end{proof}
\noindent
Note that the blow-up sequence then \emph{almost} behaves like one bubble blow-up. Using Harnack inequality (Theorem 8.20 in \cite{GT}) we then obtain:
\begin{corollary}\label{co2}
Let $(M,g)$ be a closed and smooth Riemannian manifold of dimension $n\geq 3$, fix $\xo \in M$ and $s\in (0,2)$.  Let $(h_{\alpha})_{\alpha\in \N}$ be as in the statement of Proposition \ref{main:prop} and consider $(u_{\alpha})_{\alpha\in \N}$ satisfying  equation \eqref{eqn:HSua} such that $\lim \limits_{\alpha \to+\infty} \Vert u_{\alpha}\Vert_{L^\infty(M)} =+\infty$. Then there exists $C>0$ such that 
\begin{align}\label{global_{}control}
u_{\alpha}(x)\le C\,B_{\alpha}(x)\hbox{ for all  } x\in M.
\end{align}
\end{corollary}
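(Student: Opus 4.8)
The strategy is to use Proposition~\ref{P3} to control $\ua$ by the bubble on a fixed ball around $\xo$, and then to propagate this control to the rest of $M$ by a Harnack argument on the region where the equation is uniformly elliptic with uniformly bounded coefficients.

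First, I would apply Proposition~\ref{P3} with $\varepsilon=1$ to produce a radius $\rho\in(0,\rho_1)$, independent of $\alpha$, such that
\[
\ua(x)\le 2\,B_{\alpha}(x)\quad\text{for all }x\in B_g(\xo,\rho),
\]
which already yields the desired estimate inside $B_g(\xo,\rho)$. Two elementary consequences should be recorded. Since $\ma\to0$, bounding the denominator of $B_\alpha$ by a constant depending only on $\bns$ and $\operatorname{diam}(M)$ gives
\[
B_{\alpha}(x)\ge c\,\ma^{\frac{n-2}{2}}\quad\text{for all }x\in M,
\]
for some $c>0$ and all large $\alpha$. Moreover, on the sphere $\partial B_g(\xo,\rho)$ one has $\ua\le 2B_\alpha\le C_\rho\,\ma^{\frac{n-2}{2}}$. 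Thus the whole matter reduces to showing $\sup_{M\setminus B_g(\xo,\rho)}\ua\le C\,\ma^{\frac{n-2}{2}}$.

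On $M\setminus B_g(\xo,\rho/2)$ the singular weight is harmless, $d_g(\xo,\cdot)^{-s}\le (\rho/2)^{-s}$, and Lemma~\ref{P1} gives $\ua(x)\le C\,d_g(\xo,x)^{-\frac{n-2}{2}}\le C(\rho/2)^{-\frac{n-2}{2}}$; hence $\ua$ is bounded uniformly in $\alpha$ there. Consequently $\ua$ is a positive solution of the linear equation
\[
\dg\ua+\Big(h_\alpha-\ua^{\crits-2}\,d_g(\xo,\cdot)^{-s}\Big)\ua=0
\]
whose zeroth-order coefficient is bounded uniformly in $\alpha$ (using $h_\alpha\to h_\infty$ in $C^1$ together with the two bounds just noted). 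Since $\dg$ is uniformly elliptic and in divergence form on this fixed compact region, Harnack's inequality (Theorem~8.20 in \cite{GT}), applied through a finite chain of balls covering the connected compact set $M\setminus B_g(\xo,\rho)$, provides a constant $C>0$ independent of $\alpha$ with
\[
\sup_{M\setminus B_g(\xo,\rho)}\ua\le C\inf_{M\setminus B_g(\xo,\rho)}\ua\le C\min_{\partial B_g(\xo,\rho)}\ua\le C\,\ma^{\frac{n-2}{2}}.
\]
Combining with $B_\alpha\ge c\,\ma^{\frac{n-2}{2}}$ yields $\ua\le (C/c)B_\alpha$ on $M\setminus B_g(\xo,\rho)$, and together with the interior bound this proves the global control claimed in the corollary.

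The main obstacle is the uniformity in $\alpha$ of the Harnack constant: this is precisely why I first secure the uniform $L^\infty$ bound away from $\xo$ (so that the zeroth-order coefficient does not degenerate as $\alpha\to+\infty$) and why the chaining is carried out on the fixed connected set $M\setminus B_g(\xo,\rho)$ rather than on $\alpha$-dependent domains. One should also note that $M\setminus B_g(\xo,\rho)$ is connected for $\rho$ small (as $n\ge2$), so that the covering chain, and hence the resulting constant, can be chosen independently of $\alpha$.
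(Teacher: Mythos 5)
Your proposal is correct and takes essentially the approach the paper intends: the paper's own proof is just the one-line observation that Proposition~\ref{P3} controls $u_\alpha$ by $B_\alpha$ on a fixed ball around $\xo$ and the Harnack inequality (Theorem~8.20 of \cite{GT}) propagates the bound to the rest of $M$, and you have supplied precisely the details left implicit (uniform boundedness of the zeroth-order coefficient via Lemma~\ref{P1}, the lower bound $B_\alpha\ge c\,\mu_\alpha^{(n-2)/2}$, and the $\alpha$-independent chaining). One cosmetic point: Proposition~\ref{P3} is stated for $\varepsilon\in(0,1)$, so take, say, $\varepsilon=1/2$ (giving $u_\alpha\le\tfrac32 B_\alpha$ on the fixed ball) rather than $\varepsilon=1$; the argument is otherwise unchanged.
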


\noindent
Also, as in Proposition 3.4 of \cite{HCA4} it then follows that a rescaling of the blow-up sequence behaves like the Green's function.
\begin{corollary}\label{lim:u:G}
Let $(M,g)$ be a closed and smooth Riemannian manifold of dimension $n\geq 3$, fix $\xo \in M$ and $s\in (0,2)$.  Let $(h_{\alpha})_{\alpha\in \N}$ be as in the statement of Proposition \ref{main:prop} and consider $(u_{\alpha})_{\alpha\in \N}$ satisfying  equation \eqref{eqn:HSua} such that $\lim \limits_{\alpha \to+\infty} \Vert u_{\alpha}\Vert_{L^\infty(M)} =+\infty$. Then there exists a dimensional constant $d_{n}>0$ such that 
\begin{equation}
\lim_{\alpha\to +\infty}\ma^{\frac{2-n}{2}}u_{\alpha}=d_{n}\,G_{x_0} \bb{ in } C^{1}_{loc}(M\backslash \{x_0\}),
\end{equation}
where $G_{x_0}$ is the Green's function of $\Delta_g+h_{\infty}$ on $M$ with  pole at $x_{0}$.
\end{corollary}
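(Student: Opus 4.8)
The strategy is to pass to the limit in the Green's representation formula. Set $\va := \ma^{\frac{2-n}{2}}\ua$. Dividing \eqref{eqn:HSua} by $\ma^{\frac{n-2}{2}}$ shows that $\va$ solves
\begin{equation*}
\dg \va + h_\alpha \va = \ma^{\frac{2-n}{2}}\frac{\ua^{\crits-1}}{d_g(\xo,x)^s} \qquad \bb{ in } M\setminus\{\xo\}.
\end{equation*}
Since $\dg + h_\infty$ is coercive, for $\alpha$ large $\dg + h_\alpha$ is coercive as well; let $G_\alpha>0$ be its Green's function and $G_{\xo}$ the Green's function of $\dg + h_\infty$. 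By the Green's representation formula, for every $x \in M\setminus\{\xo\}$,
\begin{equation*}
\va(x) = \int_M G_\alpha(x,z)\,\ma^{\frac{2-n}{2}}\frac{\ua^{\crits-1}(z)}{d_g(\xo,z)^s}\,dv_g(z).
\end{equation*}

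Fix $x \neq \xo$ and $R>0$, and split this integral over $B_g(\xo,R\ma)$ and its complement. On the inner ball I would rescale $z = \exp_{\xo}(\ma Z)$ and use $\ua(\exp_{\xo}(\ma Z)) = \ma^{\frac{2-n}{2}}\tua(Z)$ from \eqref{defuta}; a bookkeeping of the powers of $\ma$, which cancel exactly because $(n-2)\crits/2 = n-s$, turns the inner integral into
\begin{equation*}
\int_{B(0,R)}G_\alpha\big(x,\exp_{\xo}(\ma Z)\big)\frac{\tua^{\crits-1}(Z)}{|Z|^s}\,(1+\smallo(1))\,dZ.
\end{equation*}
The continuity of the family of Green's functions gives $G_\alpha(x,\exp_{\xo}(\ma Z)) \to G_{\xo}(x)$ uniformly for $|Z|\le R$, and together with the strong convergence $\tua \to U$ from Lemma \ref{P2} (the singularity $|Z|^{-s}$ being integrable since $s<2$), the inner integral converges to $G_{\xo}(x)\int_{B(0,R)}\frac{U^{\crits-1}}{|Z|^s}\,dZ$ as $\alpha\to+\infty$.

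For the complement I would split further. Over $M\setminus B_g(\xo,\delta)$ the global bound $\ua \le C B_\alpha \to 0$ of Corollary \ref{co2} makes the contribution $\smallo(1)$. Over the annulus $B_g(\xo,\delta)\setminus B_g(\xo,R\ma)$ the same bound $\ua \le C B_\alpha$ combined with the rescaling $z=\exp_{\xo}(\ma Z)$ controls the contribution by
\begin{equation*}
C\,G_{\xo}(x)\int_{B(0,\delta/\ma)\setminus B(0,R)}\frac{U^{\crits-1}(Z)}{|Z|^s}\,(1+\smallo(1))\,dZ,
\end{equation*}
which is dominated by $C\,G_{\xo}(x)\int_{|Z|\ge R}\frac{U^{\crits-1}}{|Z|^s}\,dZ + \smallo(1)$. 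Since $\int_{\R^n}\frac{U^{\crits-1}}{|Z|^s}\,dZ = (n-2)\omega_{n-1}\bns^{-\frac{n-2}{2-s}}$ converges, as computed in the Claim of Step \ref{s3}, this tail tends to $0$ upon first sending $\alpha\to+\infty$ and then $R\to+\infty$. Combining the three pieces yields the pointwise limit $\va(x)\to d_n\,G_{\xo}(x)$, where $d_n := \int_{\R^n}\frac{U^{\crits-1}}{|Z|^s}\,dZ>0$.

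Finally, to upgrade pointwise convergence to $C^1_{\loc}(M\setminus\{\xo\})$, note that on any compact $K\subset M\setminus\{\xo\}$ the control $\ua \le C B_\alpha$ shows $\va$ is uniformly bounded, while the right-hand side $\ma^{\frac{2-n}{2}}\ua^{\crits-1}d_g(\xo,\cdot)^{-s} = \va\,\ua^{\crits-2}d_g(\xo,\cdot)^{-s} \to 0$ uniformly on $K$, since $\ua\to0$ there and $\crits>2$. Standard elliptic estimates then bound $\va$ in $C^{1,\gamma}(K)$, so that the pointwise limit is attained in $C^1_{\loc}(M\setminus\{\xo\})$ and $d_nG_{\xo}$ solves $\dg v + h_\infty v = 0$ away from $\xo$. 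The main difficulty lies in the intermediate annulus $B_g(\xo,\delta)\setminus B_g(\xo,R\ma)$: one must exploit the sharp global bound of Corollary \ref{co2} to dominate the rescaled integrand by the integrable profile $U^{\crits-1}|Z|^{-s}$ uniformly in $\alpha$, so the tail can be made arbitrarily small by letting $\alpha\to+\infty$ first and then $R\to+\infty$. The delicate point is that the factor $G_\alpha(x,z)$, though singular at $z=x$, stays bounded on the relevant region, because $x$ is fixed away from $\xo$ while the mass of the source concentrates at $\xo$.
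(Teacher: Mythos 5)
Your proposal is correct and takes essentially the same route as the paper, which at this point simply defers to Proposition 3.4 of \cite{HCA4}: a Green's representation formula for $u_{\alpha}$ (already invoked in the proof of Claim \ref{step:signg}), the exact cancellation of powers of $\ma$ after rescaling, the bubble convergence of Lemma \ref{P2} on $B_g(\xo,R\ma)$, the global bound of Corollary \ref{co2} to kill the annulus and far-field contributions, and elliptic estimates to upgrade to $C^{1}_{\loc}$. The only cosmetic point is that on $M\setminus B_g(\xo,\delta)$ the factor $G_{\alpha}(x,\cdot)$ is singular at $z=x$, so there you should conclude via the uniform bound $\sup_{\alpha}\int_{M}G_{\alpha}(x,z)\,dv_g(z)<\infty$ rather than via boundedness of the kernel; this is standard and does not affect the argument.
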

\medskip

\noindent{\bf Proof of the Proposition \ref{main:prop} : } It follows from Proposition~\ref{P3} that conditions \eqref{acontrol1} and \eqref{acontrol2} holds in lemma \eqref{lem:poho}, with $ \rho_{\epsilon} > 0$ for all $\eps>0$. Then, using Lemma~\ref{lem:poho}, we obtain condition $(i)$.
\begin{align*}
\varphi_{\infty}(\xo) = 0 \quad \text{for all } n \geq 4.
\end{align*}

\noindent 
The case $n=3$, where the ``mass'' of the operator $\Delta_{g}+h$ plays the crucial role, follows from  Step 4.13 of \cite{HCA4}. In fact,  the Pohozaev identity \eqref{PoIden2} and lemma \eqref{lim:u:G} as in Step 4.13 of \cite{HCA4} implies 
\begin{align}\label{PoIden3}
&\delta\int_{\partial B(0,\delta)} \left(\frac{|\nabla \widehat{G}_{x_0}|^{2}}{2}  +\widehat{h}_{\infty}\frac{\widehat{G}_{x_0}^{\,2}}{2}\right)\, d\sigma(X)\notag\\
&-\frac{1}{\delta}\int_{\partial B(0,\delta)}\left(\langle X, \nabla \widehat{G}_{x_0}\rangle^2 +\frac{1}{2}\langle X, \nabla \widehat{G}_{x_0}\rangle \widehat{G}_{x_0} \right) \, d\sigma(X)=\bigO\left(\delta \right)
\end{align}
for all $\delta>0$ sufficiently small. Here    
\begin{equation}\label{greenfunction}
\widehat{G}_{x_0}(x):=G(x_0,\exp_{x_0}(X))=\frac{1}{4\pi |X|}+\beta_{h_{\infty}}(x_{0},\exp_{x_0}(X)),
\end{equation}
where we have used \eqref{green0}.
Computing the terms in \eqref{PoIden3} and letting $\delta\to 0$ yields that the {\emph{mass}} $m_{h_{\infty}}(x_{0})=\beta_{h_{\infty}}(x_{0,}x_{0})=0$. For details see  Step 4.13 of \cite{HCA4}. This completes the proof of Proposition~\ref{main:prop}. 
\medskip


\bibliographystyle{amsplain}
\bibliography{biblio}
\end{document}